\documentclass[11pt]{amsart}

\usepackage[a4paper,margin=28mm]{geometry}
\usepackage{amsmath,amssymb,amsthm,mathtools}
\mathtoolsset{showonlyrefs}
\numberwithin{equation}{section}

\usepackage[numbers,sort&compress]{natbib}
\usepackage{doi}

\usepackage{xcolor}
\usepackage{hyperref}
\usepackage{enumerate}

\hypersetup{
  colorlinks=true,
  linkcolor=blue!45!black,
  citecolor=blue!45!black,
  urlcolor=blue!45!black,
  pdftitle={The frame set of the first Hermite function},
  pdfauthor={Markus Faulhuber and Philipp Petersen}
}

\newtheorem{theorem}{Theorem}[section]
\newtheorem{proposition}[theorem]{Proposition}
\newtheorem{lemma}[theorem]{Lemma}
\newtheorem{corollary}[theorem]{Corollary}
\newtheorem{conjecture}[theorem]{Conjecture}
\theoremstyle{definition}
\newtheorem{definition}[theorem]{Definition}
\theoremstyle{remark}
\newtheorem{remark}[theorem]{Remark}

\newcommand{\R}{\mathbb R}
\newcommand{\C}{\mathbb C}
\newcommand{\Z}{\mathbb Z}
\newcommand{\N}{\mathbb N}
\newcommand{\Q}{\mathbb Q}
\newcommand{\ii}{\mathrm i}
\newcommand{\e}{\mathrm e}
\newcommand{\cG}{\mathcal G}
\newcommand{\cF}{\mathcal F}
\newcommand{\cW}{\mathcal W}
\newcommand{\cZ}{\mathcal Z}
\newcommand{\Hullw}{\operatorname{Hull}_{w}}
\newcommand{\Wr}{\operatorname{Wr}}
\newcommand{\ord}{\operatorname{ord}}

\title{The frame set of the first Hermite function}

\author[M. Faulhuber]{Markus Faulhuber}
\address{
    Faculty of Mathematics,
    University of Vienna,\newline
    Oskar-Morgenstern-Platz 1,
    1090 Vienna, Austria
}
\email{markus.faulhuber@univie.ac.at}

\author[P. Petersen]{Philipp Petersen}
\address{
    Faculty of Mathematics,
    University of Vienna,\newline
    Kolingasse 14-16,
    1090 Vienna, Austria
}
\email{philipp.petersen@univie.ac.at}

\date{}

\thanks{The research of M.~F. was supported by the Austrian Science Fund (FWF) [\href{https://doi.org/10.55776/PAT5102224}{10.55776/PAT5102224}]}
\subjclass[2020]{Primary 42C15; Secondary 42C40}
\keywords{Gabor frames, Hermite functions, Gaussian shift-invariant spaces, Wronskian}

\begin{document}

\begin{abstract}
    We determine the frame set of the first Hermite function, thereby proving a conjecture of Lyubarskii and Nes. A characterization of semi-regular Gabor frames due to Gr\"ochenig, Romero, and St\"ockler reduces the problem to a uniqueness question for entire functions in a Gaussian shift-invariant space. We address this question by forming Wronskians of finitely many translates of a Gaussian shift-invariant function. Their common critical points become zeros of increasing multiplicity, while automorphy shows that the Wronskians remain within a Gaussian shift-invariant class. A sharp zero-density theorem then rules out all possible failures of the frame property except at the known rational obstructions. This yields the complete rectangular frame set of the first Hermite function and introduces Wronskian amplification as a new method in Gabor frame theory.
\end{abstract}

\maketitle

\vspace{-\baselineskip}
\section{\texorpdfstring{Introduction and main result}{Introduction and main result}}\label{sec:intro}

For $x,\omega\in\R$ and $g\in L^2(\R)$, its time-shifts and frequency-shifts (modulations) are
\begin{equation}
    T_x g(t) = g(t-x),
    \qquad
    M_\omega g(t) = \e^{2\pi\ii\omega t} g(t)
    \quad\text{for almost every }t\in\R.
\end{equation}
In this work, we will use semi-regular Gabor systems of the form
\begin{equation}
    \cG(g,\Lambda \times \Z) = \{ M_\omega T_x g : (x,\omega) \in \Lambda \times \Z\},
\end{equation}
where the window $g \in L^2(\R)$ and $\Lambda \subset \R$ discrete. If $\Lambda$ is a lattice, i.e., $\Lambda = \delta \Z$, $\delta > 0$, then we obtain a rectangular Gabor system with time spacing $\delta$ and frequency spacing 1. For a Gabor system over a general rectangular lattice $a \Z \times b \Z$, $a,b>0$, we also use the notation
\begin{equation}
  \cG(g,a,b) := \cG(g,a\Z \times b\Z) = \{M_\omega T_x g : (x,\omega) \in a \Z \times b \Z \}.
\end{equation}
The Gabor system is a frame for $L^2(\R)$ if there are constants $0<A\leq B<\infty$ such that
\begin{equation}
  A\|f\|_{L^2}^2
  \leq
  \sum_{m,n\in\Z}|\langle f,M_{mb}T_{na}g\rangle|^2
  \leq
  B\|f\|_{L^2}^2
  \quad\text{for all }f\in L^2(\R).
\end{equation}
Thus, a Gabor frame provides a stable, generally redundant expansion of every function in $L^2(\mathbb{R})$ in terms of time-frequency shifts $M_\omega T_x g$ of a single window $g$. The frame-set problem asks which parameters $(a,b)$ satisfy this stable reconstruction property for a fixed $g$. Put differently: for which $(a,b)$ is $\cG(g,a,b)$ a frame? As in \cite{Gro14}, we define the frame set of $g$ as
\begin{equation}
  \cF (g) = \{(a,b) \in \R_{>0}^2 : \cG(g,a,b) \text{ is a frame}\}.
\end{equation}

For $n \in \N_0 = \N \cup \{0\}$, we define the $n$-th Hermite function by
\begin{equation}\label{eq:hn}
    h_n(t)
    =
    c_n(-1)^n \e^{\pi t^2}
    \frac{d^n}{dt^n} \e^{-2\pi t^2},    
    \qquad  \text{ for } t\in\R,
\end{equation}
where the positive constant $c_n$ is chosen so that $\|h_n\|_{L^2}=1$, see \cite{FollandPhase}. Note that $h_0$ is the Gaussian function $h_0(t) = 2^{1/4} \e^{-\pi t^2}$ for $t\in\R$. The Hermite function of order $n$ is thus a Gaussian multiplied by a suitably normalized Hermite polynomial.

In \cite{Gro14}, a conjecture suggesting a relatively simple structure of the frame sets of Hermite functions of order $n\geq1$ was proposed. Subsequent work disproved this conjecture for all higher-order Hermite functions, leaving only the case $n=1$ open \cite{Lem17,HorLemVid25}. The remaining conjecture, indeed formulated before \cite{Gro14}, was first stated by Lyubarskii and Nes \cite{LN}.
\begin{conjecture}[\cite{LN}, \S~5]\label{conj:h1}
    The frame set of the first Hermite function is given by
    \begin{equation}
        \cF(h_1)
        =\left\{(a,b) \in \R_{>0}^2:
        ab<1
        \text{ and }
        ab\neq\frac{q-1}{q}
        \text{ for every }q\in\Z_{\geq2}
        \right\}.
    \end{equation}
\end{conjecture}
Our main result proves that Conjecture~\ref{conj:h1} is correct.

\begin{theorem}\label{thm:main}
  Let $(a,b) \in \R_{>0}^2$. Then, 
  \begin{equation}\label{eq:mainequivalence}
    \cG(h_1,a,b)\text{ is a frame}
    \quad\Longleftrightarrow\quad
    ab<1
    \quad\text{and}\quad
    ab\neq \frac{q-1}{q}
    \quad\text{for every }q\in\Z_{\geq2}.
  \end{equation}
\end{theorem}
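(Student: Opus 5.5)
We prove the two directions separately. The necessity part ($\Rightarrow$) collects known facts. If $ab\ge 1$, the density theorem (Balian--Low for $ab=1$, since $h_1$ lies in the Feichtinger algebra) excludes frames. For $ab=(q-1)/q$ with $q\ge 2$, Lyubarskii and Nes exhibited an explicit obstruction based on the oddness of $h_1$. Their argument produces, via the Zak transform and the symmetry $h_1(-t)=-h_1(t)$, a common zero of the $q\times q$ Zibulski--Zeevi matrix. Hence the lower frame bound fails, and we simply cite it.

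For sufficiency ($\Leftarrow$) we first normalize. By the metaplectic dilation $f\mapsto a^{-1/2}f(\cdot/a)$, which maps Hermite functions to dilated Hermite functions while preserving $ab$, we reduce to the semi-regular setting $\cG(g,\Lambda\times\Z)$ with $g$ a dilate of $h_1$. We then invoke the characterization of Gr\"ochenig, Romero, and St\"ockler. For windows of the form $g=(\text{polynomial})\cdot\text{Gaussian}$, or more precisely windows whose Fourier/Zak structure makes the shift-invariant space $V^\infty(\hat g)$ consist of entire functions, $\cG(g,\Lambda\times\Z)$ is a frame if and only if every weak limit $\Gamma$ of translates of $\Lambda$ is a uniqueness set for the bounded Gaussian-type shift-invariant space. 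For $h_1$ the relevant space is
\begin{equation}
  V=\Bigl\{F(z)=\sum_{k\in\Z} c_k\,(z-k)\,\e^{-\pi\gamma(z-k)^2}:(c_k)\in\ell^\infty\Bigr\},
\end{equation}
where $\gamma>0$ depends on $a$. With $\Lambda=\delta\Z$ and $\delta=ab$, the weak limits are the translates $x+\delta\Z$. So it suffices to show the following: if $\delta<1$, $\delta\ne (q-1)/q$, and $F\in V$ vanishes on $x+\delta\Z$, then $F\equiv 0$.

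For the uniqueness statement we use the Wronskian amplification described in the abstract. Write $F=G'$-type combinations: each $F\in V$ equals $\e^{-\pi\gamma z^2}$ times a function with quasi-periodicity under $z\mapsto z+1$, that is, a theta-type automorphy factor. Suppose $F$ vanishes on $x+\delta\Z$ with $\delta=p/q$ rational, or approximate in the irrational case. Then the translates $F(\cdot+j\delta)$, $j=0,\dots,N-1$, share the zero set modulo the lattice generated by $1$ and $\delta$. Form the Wronskian
\begin{equation}
  W_N=\Wr\bigl(F,\,T_{-\delta}F,\dots,T_{-(N-1)\delta}F\bigr).
\end{equation}
Common zeros of the translates become zeros of $W_N$ of multiplicity roughly $N$ or growing with $N$. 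Meanwhile automorphy shows $W_N$ lies again in a Gaussian shift-invariant class, with Gaussian parameter scaling like $N\gamma$ and with controlled growth. A sharp zero-density theorem for such classes, via Jensen's formula and counting zeros per period against the Gaussian growth, then yields a bound of the form $N^2\cdot D^-(x+\delta\Z)\le C N\gamma+o(N^2)$, up to the precise constants. Letting $N\to\infty$ forces either $W_N\equiv 0$ or a contradiction with density $1/\delta>1$.

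If $W_N\equiv0$, the translates are linearly dependent over entire functions. Combined with the shift structure, this forces a finite-order linear relation among the $T_{j\delta}F$ with constant coefficients. Such a relation is possible only when $\delta$ is rational with a specific denominator, and analyzing the odd symmetry of $h_1$ pins it to exactly $\delta=(q-1)/q$. The main obstacle is this last step, together with making the density count sharp enough to reach all $\delta<1$ rather than $\delta<1/2$. I would first treat rational $\delta=p/q$, where the zero set of $F$ is periodic and the Wronskian multiplicities can be computed exactly. I would then pass to irrational $\delta$ by compactness of the weak-limit formulation.
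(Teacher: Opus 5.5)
Your necessity argument and the Gr\"ochenig--Romero--St\"ockler reduction to uniqueness on translates $x+\delta\Z$ match the paper. The sufficiency argument, however, has a genuine gap at its core: you form the Wronskian of translates of the function that \emph{vanishes} on $x+\delta\Z$, and that function is not automorphic. Your $F(z)=\sum_k c_k(z-k)\e^{-\pi\gamma(z-k)^2}$ equals $-\frac{1}{2\pi\gamma}G'$ with $G(z)=\sum_k c_k\e^{-\pi\gamma(z-k)^2}$. With $\beta=1/\gamma$ one has $G(z+\ii\beta)=\rho_\beta(z)G(z)$, but $F(z+\ii\beta)=\rho_\beta(z)\bigl(F(z)+\ii\beta\,G(z)\bigr)$. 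So under $z\mapsto z+\ii\beta$, every row of the Wronskian matrix of the translates of $F$ picks up an extra multiple of the vector of translates of $G$. The determinant is then not multiplied by a single factor, and the claim that $W_N$ is again Gaussian shift-invariant fails; already for $N=1$, where $W_1=F$, it is false. Your description of $V$ as quasi-periodic under $z\mapsto z+1$ is also wrong: the relevant quasi-period is $\ii\beta$, and only $G$ transforms by a pure factor.

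The missing idea is to pass to the antiderivative. The lattice points are common \emph{critical points} of $G_j=G(\cdot-j\delta)$, and each $G_j$ satisfies $V_\beta G_j=\e^{2\pi\ii j\delta}G_j$. The Leibniz terms then act by a unipotent lower-triangular change of rows, so $\cW_N(z+\ii\beta)=\rho_\beta(z-\sigma)^N\cW_N(z)$ with $\sigma=\frac{N-1}{2}\delta$. Together with a strip bound, this gives an expansion in $\e^{-\pi N(z-\sigma-n/N)^2/\beta}$ with bounded coefficients. The price is that the row-index count forces row $1$ instead of row $0$, giving order $\ge N-1$ rather than $N$ at each lattice point. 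This loss of one order is exactly what leaves room for $\delta=(q-1)/q$.

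Your density step and your account of the exceptional set also do not work. Read literally, $N^2D^-\le CN\gamma+o(N^2)$ gives $1/\delta\le o(1)$ for every $\delta$, so it is not the right count. The correct count pits multiplicity $N-1$ at spacing $\delta$ against the $\beta$-independent bound $D^-(\cZ(f),\nu_f)\le1$ for real zeros of nonzero $f\in V^\infty(\e^{-\pi t^2/\beta})$. Applied to $H_N(z)=\cW_N(z/N+\sigma)$, this yields $(N-1)/(N\delta)\le1$.

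Nor does one ever need to analyze $W_N\equiv0$. Eigenfunctions of $V_\beta$ with distinct characters $\e^{2\pi\ii j\delta}$ are linearly independent. The characters are distinct for all $N$ if $\delta\notin\Q$, and for $N=q$ if $\delta=p/q$ in lowest terms. The exceptional values then come from arithmetic alone: $p/q\ge(q-1)/q$ forces $p=q-1$. They do not come from a degenerate Wronskian or from the oddness of $h_1$, which enters only through the Lyubarskii--Nes necessity result.

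Finally, irrational $\delta$ is handled directly by letting $N\to\infty$. Your plan to reach it by approximation from rationals cannot work, since being a frame at nearby parameters says nothing about the limit point. The points $(q-1)/q$ themselves show this: they are limits of frame parameters but are not frame parameters.
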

The proof has one central amplification step. A non-frame Gabor system produces a nonzero Gaussian shift-invariant entire function $F$ whose derivative vanishes on a translate of the lattice $\delta\Z$. For every integer $N\geq2$ for which the relevant characters $\e^{2 \pi \ii j \delta}$ with $j=0, \ldots N-1$ are pairwise distinct, we form the Wronskian of the translates $F(z-j\delta)$. These translates are linearly independent, so the Wronskian is nonzero, while their common critical points become zeros of multiplicity at least $N-1$. Automorphy shows that, after translation and dilation, the Wronskian is again a Gaussian shift-invariant entire function with bounded coefficients. A sharp zero-density theorem then excludes every irrational $\delta<1$ and forces a rational product $\delta=p/q$ in lowest terms to satisfy $p=q-1$.
\begin{remark}
    We note that \cite{LieShafTay} establishes the asserted frame-set characterization of Theorem \ref{thm:main} under the assumption that $ab\in\mathbb Q$ and, more generally, treats rational lattices beyond the rectangular setting. Conjecture~\ref{conj:h1}, however, was formulated for rectangular lattices. Our contribution is a complete proof for arbitrary rectangular lattice parameters, based on a novel, fundamentally different proof mechanism. In particular, our argument introduces Wronskian amplification as a new method for studying Gabor frame sets.
\end{remark}

The implication ``$\Longrightarrow$'' in \eqref{eq:mainequivalence} follows from three established facts. First, the density theorem for lattice Gabor frames yields $ab\leq 1$ \cite{GroechenigBook,HeilDensity}. Second, at the critical density $ab=1$, the Balian--Low theorem excludes a frame generated by the first Hermite function $h_1$ \cite{Balian,Low}; see also \cite{BHW} and \cite[Chap.~8]{GroechenigBook}. Third, Lyubarskii and Nes \cite{LN} proved that an odd window $g\in M^1(\R)$, the so-called Feichtinger algebra introduced in \cite{Fei81}, cannot generate a rectangular Gabor frame when
\begin{equation}
    ab=\frac{q-1}{q} \qquad \text{for some } q\in\Z_{\geq 2}.
\end{equation}
Since $h_1\in\mathcal S(\R)\subset M^1(\R)$, this obstruction applies to the first Hermite function. It therefore remains to prove that $\mathcal G(h_1,a,b)$ is a frame whenever
\begin{equation}
    0<ab<1 \qquad\text{and}\qquad ab\neq\frac{q-1}{q} \quad\text{for every }q\in\Z_{\geq 2}.
\end{equation}

Our contribution is the proof of the converse implication ``$\Longleftarrow$''. Set $\delta=ab$. In \S~\ref{sec:preliminaries}, we first use the characterization of semi-regular Gabor frames in terms of sampling in shift-invariant spaces \cite{GRSsampling} to show that, if the Gabor system is not a frame, then there exists a nonzero entire function of the form
\begin{equation}
    F(z)=\sum_{k\in\Z}c_k \e^{-\pi(z-k)^2/\beta}, \qquad (c_k)_{k\in\Z}\in\ell^\infty(\Z), \ \beta > 0,
\end{equation}
such that
\begin{equation}
    F'(x_*+n\delta)=0 \qquad\text{for every }n\in\Z
\end{equation}
and some $x_*\in\R$. Thus, the frame problem is reduced to determining whether a nonzero Gaussian shift-invariant entire function can have critical points on the shifted lattice $x_*+\delta \Z$.

To rule out such a function, we fix an integer $N\geq 2$ for which the characters
\begin{equation}
    1,\e^{2\pi\ii\delta},\ldots,\e^{2\pi\ii(N-1)\delta}
\end{equation} 
are pairwise distinct, and consider the Wronskian of the $N$ translates
\begin{equation}
    F(z),F(z-\delta),\ldots,F(z-(N-1)\delta),
\end{equation}
in \S~\ref{sec:Wronskian}. The distinctness of the characters implies that these translates are linearly independent, and that their Wronskian is not identically zero. On the other hand, the common critical-point condition implies that this Wronskian has a zero of multiplicity at least $N-1$ at every point of $x_*+\delta\Z$. This Wronskian construction is the main new proof ingredient: it converts common critical points of translates into zeros of increasing multiplicity.

In \S~\ref{sec:Wronskian_SI_function}, we establish that the Wronskian inherits an automorphy relation from the Gaussian shift-invariant function $F$. After a suitable translation and dilation, it is therefore again a nonzero Gaussian shift-invariant function with bounded coefficients. Its zero set contains a translate of the lattice $N\delta\Z$, with multiplicity at least $N-1$ at every point. The weighted lower density of these zeros is consequently
\begin{equation}
    \frac{N-1}{N\delta}.
\end{equation}

Lastly, in \S~\ref{sec:zero_density_proof}, we use a sharp zero-density theorem for Gaussian shift-invariant functions to bound this quantity from above by $1$, and hence
\begin{equation}
    \delta\geq 1-\frac1N.
\end{equation}
The arithmetic conclusion is now immediate. If $\delta$ is irrational, the relevant characters are pairwise distinct for every $N\geq2$, and letting $N$ tend to infinity contradicts $\delta<1$. If $\delta=p/q$ in lowest terms, we may take $N=q$, which gives
\begin{equation}
    \frac pq\geq\frac{q-1}{q}.
\end{equation} Since $p<q$, this forces $p=q-1$. Thus, failure of the frame property below the critical density can occur only when $\delta=(q-1)/q$, which gives the claimed result.

The Wronskian construction may be viewed as a multiplicity-amplification principle: it converts common critical points of translates into zeros of increasing multiplicity, while automorphy keeps the resulting function within a Gaussian shift-invariant class. This allows a sharp zero-density theorem to be applied to information that initially concerns critical points rather than zeros, and adds a new method to the study of Gabor frame sets.

\subsection{Related work}
\label{subsec:related-work-priority}

Determining the frame set of a prescribed window is an active topic in Gabor analysis, and these sets can have a surprisingly intricate geometry even for elementary windows, as evidenced by the characteristic function of an interval: its parameter region gives rise to the celebrated ``Janssen tie''~\cite{JanssenTie}, and its complete classification became known as the ``$abc$-problem for Gabor systems'', solved by Dai and Sun~\cite{DaiSunABC}. Complete descriptions have, for example, also been obtained for the hyperbolic secant~\cite{JanssenStrohmer2002}, totally positive functions of finite or Gaussian type~\cite{GRSsampling,GroechenigStoeckler2013}, the Haar window~\cite{DaiZhu2024}, and certain rational functions \cite{BelKulLyu23}. Most recently, de Dios Pont, Gr\"ochenig, Liehr, Shafkulovska, and Taylor gave a complete characterization of the frame set of all totally positive functions~\cite{DiosPontEtAl2026}.

The frame-set problem for Hermite windows lies at the intersection of Gabor analysis, sampling in Bargmann--Fock spaces, and the theory of theta functions. For a rectangular lattice Gabor system with parameters $a,b>0$, the density theorem gives the necessary condition $ab\leq 1$ \cite{HeilDensity}, while at critical density $ab=1$ the Balian--Low theorem excludes a frame generated by the first Hermite function \cite{Balian,Low}; cf.~\cite{BHW}. For the Gaussian, the full subcritical region is obtained through the sampling theory of the Bargmann--Fock space \cite{Lyubarskii1992,Seip1992,SeipWallsten1992}. Building on this connection, Gr\"ochenig and Lyubarskii developed Fock-space methods for Gabor systems generated by Hermite functions and, in particular, proved the sufficient condition $ab<1/(n+1)$ for the $n$-th Hermite function \cite{GrochenigLyubarskii2007,GrochenigLyubarskii2009}. The range of parameters $a,b>0$ for which $\cG(h_n,a,b)$ is a frame has recently been extended \cite{FauShaZlo25, FauShaZlo26}. Yet, for $n\geq2$ we lack a complete characterization of $\cF(h_n)$.

Lyubarskii and Nes \cite{LN} identified the exceptional densities relevant to the present paper.  Working at rational oversampling $1 < ab \in \Q$, they used the vector-valued Zak transform and a rational analog of the Ron--Shen Gramian \cite{RonShen1997} (see also \cite{ZibZee97}) to prove that, for every odd window in the Feichtinger algebra, the Gabor system fails to be a frame whenever
\begin{equation}
        ab=\frac{q-1}{q}  \text{ and } q\geq 2.
\end{equation}
They then conjectured that these are the only obstructions below critical density for the first Hermite function. This became the remaining first-order case in the broader conjectural picture for Hermite frame sets discussed in \cite{Gro14}. Lemvig subsequently disproved the corresponding conjecture for several infinite families of higher-order Hermite functions \cite{Lem17}; Horst, Lemvig, and Videb{\ae}k settled all of the remaining higher-order cases negatively, leaving precisely the first Hermite function as the unresolved case \cite{HorLemVid25}. Liehr, Shafkulovska, and Taylor \cite{LieShafTay} confirmed the conjecture of Lyubarskii and Nes \cite{LN} for rational lattices, i.e., when $ab \in \Q$, using the rational Zibulski--Zeevi analog \cite{ZibZee97} of the Ron--Shen Gramian \cite{RonShen1997}.

For more details on Gabor systems and frames, we refer the reader to \cite{Christensen_2016}, \cite{GroechenigBook}.

\subsection{Disclosure on the usage of LLMs}

The authors supplied ChatGPT 5.6 Pro with a detailed roadmap for proving the conjecture. This roadmap pursued an approach based on Zak transforms, theta functions, and rational-density reductions. The proof in the present paper is based on ChatGPT's response to the roadmap.  Notably, it does \emph{not} complete that roadmap; instead, it follows a different route. The semi-regular Gabor-frame characterization and the Gaussian zero-density theorem of Gröchenig, Romero, and Stöckler with translated Wronskians were not included in the initial roadmap. The authors subsequently examined, corrected, reorganized, and rewrote the argument and take full responsibility for the correctness and presentation of the final manuscript. 

\section{Preliminaries and reduction to a critical-point problem}\label{sec:preliminaries}
We use the unnormalized first Hermite function $\phi_1\colon\R\to\R$ defined by
\begin{equation}\label{eq:phi1}
  \phi_1(t)=t\e^{-\pi t^2}
  \quad t\in\R,
\end{equation}
as multiplication of the window by a nonzero constant does not affect the frame property.

The Wiener amalgam space of continuous functions, as defined in \cite[Chapter~6]{GroechenigBook}, is
\begin{equation}
W_0=\left\{g\in C(\R):
    \sum_{k\in\Z}\max_{t\in[0,1]}|g(t-k)|<\infty\right\}.
\end{equation}
For $1\leq p\leq\infty$, $g\in W_0$, the shift-invariant space generated by the integer translates of $g$ is
\begin{equation}\label{eq:shift-invariant-space}
    V^p(g)
    =\left\{
        \sum_{k\in\Z}c_k g(\cdot-k) : c=(c_k)_{k\in\Z}\in\ell^p(\Z)
    \right\}.
\end{equation}
For $p<\infty$, the series in \eqref{eq:shift-invariant-space} are understood to converge in $L^p(\R)$. For $p=\infty$, convergence is understood locally uniformly and defines a bounded continuous function.
\begin{definition}\label{def:stableIntegerShifts}
    We say $g\in W_0$ has \emph{stable integer shifts} if there are $m_g,M_g>0$ such that, for every sequence $u=(u_k)_{k\in\Z}\in\ell^2(\Z)$, the series $\sum_{k\in\Z}u_kg(\cdot-k)$ converges in $L^2(\R)$ and
    \begin{equation}\label{eq:stable-integer-shifts}
        m_g\|u\|_{\ell^2}^2
        \leq
        \left\|\sum_{k\in\Z}u_kg(\cdot-k)\right\|_{L^2}^2
        \leq
        M_g\|u\|_{\ell^2}^2.
    \end{equation}
\end{definition}
For $g\in W_0$, the upper estimate in \eqref{eq:stable-integer-shifts} follows from H\"older's inequality \cite{AldroubiGroechenig}. Moreover, \cite[Theorem~2.1]{GRSsampling} shows that the lower estimate in \eqref{eq:stable-integer-shifts} for $p=2$ is equivalent to the corresponding lower estimate in $L^p(\R)$ for some and hence for every $p\in[1,\infty]$. Therefore, Definition \ref{def:stableIntegerShifts} is equivalent to the definition of stable integer shifts used in \cite{GRSsampling}. We will also need the following notion from \cite{GRSsampling} for a set $\Lambda \subset \R$.
\begin{definition}
    A set $\Lambda\subset\R$ is \emph{relatively separated} if it contains only a uniformly bounded number of points in every interval of length one, i.e.,
    \begin{equation}
        \sup_{t\in\R}\#\bigl(\Lambda\cap[t,t+1]\bigr)<\infty.
    \end{equation}
    For $\delta>0$, the set $\Lambda$ is called \emph{$\delta$-separated} if
    \begin{equation}
        |\lambda-\mu|\geq\delta
        \quad\text{whenever }\lambda,\mu\in\Lambda\text{ and }\lambda\neq\mu.
    \end{equation}
\end{definition}
We recall Beurling's notion of a weak limit of a sequence of sets, see \cite{GRSsampling}.
\begin{definition}
    Let $\Lambda\subset\R$ and let $(\Lambda_n)_{n \in \N}$ be a sequence of subsets of $\R$. We say that $\Lambda_n$ \emph{converges weakly} to $\Lambda$ and write $\Lambda_n\xrightarrow{w}\Lambda$ if, for every $R>0$ and $\varepsilon>0$, there exists an integer $n_0\geq1$ such that, for all integers $n\geq n_0$,
    \begin{equation}
        \Lambda_n\cap[-R,R]
        \subset \Lambda+(-\varepsilon,\varepsilon)
        \quad\text{and}\quad
        \Lambda\cap[-R,R]
        \subset \Lambda_n+(-\varepsilon,\varepsilon).
    \end{equation}
    For a set $\Lambda\subset\R$, let $\Hullw(\Lambda)$ be the collection of all sets $\Gamma\subset\R$ for which there is a sequence $(t_n)_{n \in \N}$ in $\R$ such that $t_n+\Lambda\xrightarrow{w}\Gamma$.
\end{definition}

We now recall \cite[Theorem~3.3]{GRSsampling}, with the matrix injectivity condition written out explicitly in \eqref{eq:GRS-injectivity}. This result allows us to transfer the considered frame-set problem to a sampling problem in a shift-invariant space. As noted in \cite[Sec.~2.3]{GRSsampling}, the connection between Gabor duality and sampling in shift-invariant spaces is implicit in the duality theories of Janssen~\cite{Janssen1995}, and Ron and Shen \cite{RonShen1997}.

\begin{theorem}[{\cite[Theorem~3.3]{GRSsampling}}]\label{thm:GRS-frame}
    Let $g\in W_0$ have stable integer shifts, and let $\Lambda\subset\R$ be relatively separated. Then $\cG(g,(-\Lambda) \times \Z)$ is a frame for $L^2(\R)$ if and only if the following holds: for every $\Gamma\in\Hullw(\Lambda)$ and every sequence $u=(u_k)_{k\in\Z}\in\ell^\infty(\Z)$,
    \begin{equation}\label{eq:GRS-injectivity}
        \sum_{k\in\Z}u_k g(\gamma-k)=0
        \quad\text{for all }\gamma\in\Gamma
        \quad\Longrightarrow\quad u=0.
    \end{equation}
\end{theorem}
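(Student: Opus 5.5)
This result is quoted from \cite[Theorem~3.3]{GRSsampling}, so we only outline how its proof goes; we do not reprove it in full. The plan has two stages. First, translate the Gabor frame property into a sampling property of the shift-invariant space. Second, characterize sampling by Beurling-type weak limits.

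\emph{Step 1 (duality).} By Ron--Shen/Janssen duality, or directly via the Zak-type (fiberization) identity, $\cG(g,(-\Lambda)\times\Z)$ is a frame for $L^2(\R)$ if and only if $\Lambda$ is a sampling set for $V^2(\overline{g})$ uniformly. Concretely, we would verify an inequality of the form
\begin{equation}
  A\|c\|_{\ell^2}^2\leq \sum_{\lambda\in\Lambda}\Bigl|\sum_k c_k g(\lambda-k)\Bigr|^2\leq B\|c\|_{\ell^2}^2
\end{equation}
after periodizing in the frequency variable $\omega\in\Z$. The mechanism is as follows: by Poisson summation, $\sum_{m\in\Z}|\langle f,M_mT_{-\lambda}g\rangle|^2$ equals $\int_0^1\bigl|\sum_k f(t+k)\overline{g(t+k+\lambda)}\bigr|^2dt$. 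For almost every $t$, the inner sum is the sample at $\lambda$ of an element of the shift-invariant space with coefficients $(f(t+k))_k$. The stable integer shifts and $g\in W_0$ give the upper bound and make this equivalence uniform in $t$.

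\emph{Step 2 (Beurling method).} Next we show that $\Lambda$ is sampling for $V^2(g)$ if and only if it is sampling for $V^\infty(g)$, i.e. $\|f\|_\infty\lesssim\sup_\Lambda|f|$. This uses localization and Wiener-type lemma arguments for the matrix $(g(\lambda-k))_{\lambda,k}$, which is off-diagonally decaying since $g\in W_0$. We then invoke the standard Beurling characterization: $\Lambda$ is an $L^\infty$-sampling set if and only if every $\Gamma\in\Hullw(\Lambda)$ is a uniqueness set for $V^\infty(g)$.

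The forward direction is a compactness argument. Suppose sampling holds for $\Lambda$. Since the sampling constant is translation invariant, it passes to weak limits. Relative separation and the equicontinuity of $V^\infty(g)$ let us pass to the limit in $\sup_{t_n+\Lambda}|f|$. The converse is by contradiction: take $f_n$ normalized with $\sup_\Lambda|f_n|\to0$ and translate so that $|f_n(0)|\geq 1/2$. Extract weak-$*$ limits of the coefficient sequences and of $t_n+\Lambda$ (possible by relative separation). This produces a nonzero $f\in V^\infty(g)$ vanishing on some $\Gamma\in\Hullw(\Lambda)$. The identification of nonzero $f$ with nonzero $u$ uses the stable integer shifts in the $\ell^\infty$ form.

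The main obstacle is Step 2's passage from $\ell^2$ to $\ell^\infty$ stability. It needs a Wiener-lemma or Sjöstrand-type result showing that a left-invertible, off-diagonally decaying matrix is left-invertible on all $\ell^p$ simultaneously. This is exactly the technical core of \cite{GRSsampling}, and we take it from there.
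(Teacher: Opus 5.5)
The paper gives no proof of this statement; it imports it from \cite{GRSsampling}. Your outline reproduces the architecture of that argument: fiberwise Ron--Shen/Janssen duality, $p$-independence of sampling via a Wiener-type lemma, and Beurling-type compactness. One step, however, is false as written, and it is exactly the step that explains why the hull consists of \emph{real} translates.

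\textbf{The false step.} In Step 2 you assert that $\Lambda$ is a sampling set for $V^\infty(g)$ if and only if every $\Gamma\in\Hullw(\Lambda)$ is a uniqueness set, justified by ``the sampling constant is translation invariant''. But $V^\infty(g)$ is invariant only under integer shifts, while $\Hullw(\Lambda)$ contains every $x+\Lambda$ with $x\in\R$. Take the hat function $g(t)=\max(0,1-|t|)$ and $\Lambda=\Z$. This $g$ lies in $W_0$ and has stable integer shifts, since $\sum_k|\widehat g(\xi+k)|^2=\tfrac23+\tfrac13\cos 2\pi\xi$. One has $f(j)=c_j$, so $\Z$ is sampling for every $V^p(g)$. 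Yet $u_k=(-1)^k$ gives $\sum_k u_k g(j+\tfrac12-k)=0$ for all $j$, so $\tfrac12+\Z\in\Hullw(\Z)$ is not a uniqueness set. Consistently, $\cG(g,\Z\times\Z)$ is not a frame, since its Zak transform vanishes at $(\tfrac12,\tfrac12)$.

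\textbf{The same slip in Step 1.} The inner sum $\sum_k f(t+k)\overline{g(t+k+\lambda)}$ is the sample at $t+\lambda$, not at $\lambda$, of the element of $V^2(\overline g)$ with coefficients $(f(t-k))_k$. So the frame property is equivalent to \emph{all} translates $x+\Lambda$, $x\in[0,1]$, being sampling sets with uniform bounds. The example shows this is strictly stronger than sampling on $\Lambda$ alone.

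\textbf{The repair.} Carry this whole family through Step 2.
\begin{enumerate}[(i)]
\item Forward direction. Step 1 hands you uniform constants for every real translate; integer periodicity of $V^2(g)$ reduces $x\in\R$ to $x\in[0,1]$. It is this uniformity, not translation invariance of $V^\infty(g)$, that survives passage to weak limits.
\item Localization in the limit. ``Passing to the limit in $\sup_{t_n+\Lambda}|f|$'' needs a localization, because weak convergence controls $t_n+\Lambda$ only on compacta while the supremum is global. You can multiply the coefficients by a slowly varying cutoff, which almost commutes with synthesis when $g\in W_0$. Alternatively, take the limit in the $\ell^2$ inequality for finitely supported coefficients and then apply $\ell^p$-independence to $\Gamma$ itself.
\item Converse direction. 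You may only recentre by integers. Write the near-maximum point of $f_n$ as $k_n+x_n$ with $x_n\in[0,1)$, extract $x_n\to x$, and shift $t_n+\Lambda$ by $-k_n$. The limit set is again in $\Hullw(\Lambda)$, and the limit function is nonzero at $x$ by the $\ell^\infty$ form of stable shifts.
\item Uniformity in the converse. After upgrading each $x+\Lambda$ from $\ell^\infty$- to $\ell^2$-sampling, uniformity in $x$ follows by compactness. The map $x\mapsto (g(x+\lambda-k))_{\lambda\in\Lambda,k\in\Z}$ is operator-norm continuous on $\ell^2$ when $g\in W_0$ and $\Lambda$ is relatively separated, so the lower bound is continuous in $x$ and attains a positive minimum on $[0,1]$.
\end{enumerate}
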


Proposition~\ref{prop:nonframe-reduction} specializes the preceding theorem to the derivative of a Gaussian and the lattice $\delta\Z$. In this setting, every weak limit of a lattice translate is again a translate of $\delta\Z$, so failure of the frame property produces a nonzero bounded coefficient sequence. More precisely, for $\varphi_\beta(t)=\e^{-\pi t^2/\beta}$, $t \in \R$, the associated Gaussian shift-invariant function belongs to $V^\infty(\varphi_\beta)$ and is given by
\begin{equation}
    F(t)=\sum_{k\in\Z}c_k\varphi_\beta(t-k),
    \quad  \text{ for } t\in\R.
\end{equation}
Its entire extension has a vanishing derivative on a shifted real lattice. Thus, the proposition reduces the original frame-set problem to a uniqueness problem for the critical points of an entire function.

\begin{proposition}\label{prop:nonframe-reduction}
    Let $a,b>0$, and set $\delta=ab$ and $\beta=b^2$. Further let $0<\delta<1$ and assume that $\cG(\phi_1,a,b)$ is not a frame. Then there exists $x_*\in\R$ and a nonzero sequence $c=(c_k)_{k\in\Z}\in\ell^\infty(\Z)$ such that the series
    \begin{equation}\label{eq:F-def}
        F(z)=\sum_{k\in\Z}c_k\e^{-\pi(z-k)^2/\beta},
        \quad  \text{ for } z\in\C,
    \end{equation} 
    converges locally uniformly on $\C$, defines a nonzero entire function, and satisfies
    \begin{equation}\label{eq:critical-grid}
        F'(x_*+n\delta)=0
        \quad\text{for all }n\in\Z.
    \end{equation} 
    \end{proposition}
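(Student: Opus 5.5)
We reduce Proposition~\ref{prop:nonframe-reduction} to Theorem~\ref{thm:GRS-frame} in three steps: a dilation that moves the frequency spacing to $1$, a check of the hypotheses of Theorem~\ref{thm:GRS-frame} and of the hull of $\delta\Z$, and finally the passage to the entire function $F$.

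\emph{Step 1: dilation.} Let $D_bf(t)=b^{-1/2}f(t/b)$, which is unitary on $L^2(\R)$. A direct computation gives
\begin{equation}
    D_bM_{mb}T_{na}g=M_mT_{nab}D_bg .
\end{equation}
Hence $\cG(\phi_1,a,b)$ is a frame if and only if $\cG(D_b\phi_1,\delta\Z\times\Z)$ is a frame. Up to a nonzero constant,
\begin{equation}
    D_b\phi_1(t)=(t/b)\e^{-\pi t^2/b^2}.
\end{equation}
With $\beta=b^2$ and $\varphi_\beta(t)=\e^{-\pi t^2/\beta}$ we have $\varphi_\beta'(t)=-(2\pi t/\beta)\e^{-\pi t^2/\beta}$. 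So the window may be taken to be $g=\varphi_\beta'$, since nonzero constants do not affect the frame property. Since $-\delta\Z=\delta\Z$, we apply Theorem~\ref{thm:GRS-frame} with $\Lambda=\delta\Z$, which is relatively separated.

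\emph{Step 2: hypotheses and the hull.} Since $g$ is a Schwartz function, $g\in W_0$. For stable integer shifts, I would use the standard Fourier criterion: $g$ has stable integer shifts if and only if
\begin{equation}
    \sum_{k}|\hat g(\xi+k)|^2\geq m>0
\end{equation}
for all $\xi$. Here $\hat g(\xi)=c\,\xi\,\e^{-\pi\beta\xi^2}$ with $c\neq0$. For each $\xi$, some $\xi+k$ is nonzero, and the sum is continuous and $1$-periodic, so it is bounded below by a positive constant.

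Next I describe $\Hullw(\delta\Z)$. Let $t_n+\delta\Z\xrightarrow{w}\Gamma$. Reduce $t_n$ modulo $\delta$ and pass to a convergent subsequence $t_n\to x_*$. Then $t_n+\delta\Z\xrightarrow{w}x_*+\delta\Z$. Weak limits are unique, because $\Gamma$ is recovered from the condition in the definition of weak convergence. Hence every $\Gamma\in\Hullw(\delta\Z)$ is of the form $x_*+\delta\Z$.

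Since the system is not a frame, Theorem~\ref{thm:GRS-frame} yields $x_*\in\R$ and a nonzero $u\in\ell^\infty(\Z)$ such that
\begin{equation}
    \sum_k u_k\varphi_\beta'(x_*+n\delta-k)=0
    \quad\text{for all }n\in\Z.
\end{equation}

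\emph{Step 3: the entire function.} Set $c_k=u_k$ and define $F$ by \eqref{eq:F-def}. For $z=x+\ii y$ we have
\begin{equation}
    |\e^{-\pi(z-k)^2/\beta}|=\e^{-\pi((x-k)^2-y^2)/\beta}.
\end{equation}
On compact sets this is summable in $k$, uniformly in $z$, so the series converges locally uniformly and $F$ is entire. By Weierstrass, we may differentiate termwise. This gives $F'(z)=\sum_kc_k\varphi_\beta'(z-k)$, and therefore \eqref{eq:critical-grid} holds.

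\emph{Main obstacle.} The one point needing care is that $F\not\equiv0$. This does not follow from $c\neq0$ alone; it needs injectivity of $c\mapsto\sum_kc_k\varphi_\beta(\cdot-k)$ on $\ell^\infty(\Z)$. I would obtain it from stable integer shifts of the Gaussian $\varphi_\beta$. Its periodization $\sum_k|\hat\varphi_\beta(\xi+k)|^2$ is strictly positive, so $\varphi_\beta$ has stable integer shifts. By \cite[Theorem~2.1]{GRSsampling}, this is equivalent to the lower $L^\infty$ bound $\|\sum_kc_k\varphi_\beta(\cdot-k)\|_\infty\geq m\|c\|_\infty$. Hence $F|_\R\neq0$ whenever $c\neq0$.

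Alternatively, if $F\equiv0$ then $F'\equiv0$ on $\R$. Then $\sum_k u_k\varphi_\beta'(\cdot-k)\equiv0$, which contradicts the $\ell^\infty$-injectivity for the window $g=\varphi_\beta'$ established in Step 2, again via \cite[Theorem~2.1]{GRSsampling}.
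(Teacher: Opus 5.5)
Your proposal is correct. Its structure matches the paper's proof:
\begin{itemize}
\item the same dilation $D_b$, reducing to the window $\varphi_\beta'$ on $\delta\Z\times\Z$;
\item the same periodization criterion $\sum_k|\widehat{\varphi_\beta'}(\xi+k)|^2>0$ for stable integer shifts (the paper checks this directly via Plancherel instead of citing it);
\item the same reduction of $t_n$ modulo $\delta$ to identify $\Hullw(\delta\Z)$;
\item the same termwise differentiation.
\end{itemize}

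The genuine difference is how you show $F\not\equiv0$. You obtain injectivity of $c\mapsto\sum_k c_k\varphi_\beta(\cdot-k)$ on $\ell^\infty(\Z)$ from the $p=\infty$ case of \cite[Theorem~2.1]{GRSsampling}, namely the lower bound $\|\sum_k c_k\varphi_\beta(\cdot-k)\|_{L^\infty}\geq m\|c\|_{\ell^\infty}$. In your second variant you use the same bound for $\varphi_\beta'$, which neatly reuses the stability you already verified for the GRS application.

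The paper instead completes the square:
\begin{equation}
\e^{\pi z^2/\beta}F(z)=G\bigl(\e^{2\pi z/\beta}\bigr),\qquad G(w)=\sum_k c_k\e^{-\pi k^2/\beta}w^k .
\end{equation}
Here $G$ is holomorphic on $\C^*$, so $F\equiv 0$ forces $G\equiv0$, and uniqueness of Laurent coefficients gives $c=0$.

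The paper's route is elementary and self-contained, exploits the Gaussian and holomorphic structure, and is reused in Lemma~\ref{lem:analytic-F}. Your route is shorter and applies to any $W_0$ window with stable integer shifts. It also gives a quantitative bound $\|F\|_{L^\infty(\R)}\geq m\|c\|_{\ell^\infty}$. The cost is that it rests on the nontrivial $\ell^\infty$ case of the stability equivalence in the cited theorem.

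One small imprecision: weak limits are not unique in general, only up to closure. If $t_n+\delta\Z\xrightarrow{w}\Gamma_1$ and $t_n+\delta\Z\xrightarrow{w}\Gamma_2$, one only gets $\overline{\Gamma_1}=\overline{\Gamma_2}$. You therefore need one further observation:
\begin{itemize}
\item either that $\Gamma$ is closed, which the paper derives from $\delta$-separation inherited through the two-sided approximation;
\item or simply that a set whose closure is the discrete set $x_*+\delta\Z$ must equal it.
\end{itemize}
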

    \begin{proof}
        For $t \in \R$ and $f \in L^2(\R)$, we define the unitary dilation
        \begin{equation}
            (D_bf)(t)\coloneqq b^{-1/2}f(t/b), \quad b > 0.
        \end{equation}
        Since $\delta=ab$, one has
        \begin{equation}\label{eq:dilation-intertwining}
            D_bM_{mb}T_{na}=M_mT_{n\delta}D_b
            \quad\text{for all }m,n\in\Z.
        \end{equation}
        Recall that $\phi_1(t) = t \e^{-\pi t^2}$ is the unnormalized first Hermite function defined in \eqref{eq:phi1}. Define
        \begin{equation}
            \varphi_\beta(t)=\e^{-\pi t^2/\beta},
            \qquad
            \psi_\beta(t)=\varphi_\beta'(t),
            \quad t \in \R.
        \end{equation}
        A direct calculation, using $\beta=b^2$, gives
        \begin{equation}\label{eq:dilated-phi1}
            D_b\phi_1=-\frac{b^{1/2}}{2\pi}\,\psi_\beta.
        \end{equation}
        By \eqref{eq:dilation-intertwining} and \eqref{eq:dilated-phi1}, $\cG(\phi_1,a,b)$ is a frame if and only if $\cG(\psi_\beta,\delta\Z\times\Z)$ is a frame (cf.~\cite[Chap.~9.4]{GroechenigBook}). The function $\psi_\beta$ belongs to $W_0$, because it is a polynomial times a Gaussian. Its integer shifts are stable. Indeed, with the Fourier transform convention
        \begin{equation}\label{eq:Fourier-convention}
            \widehat f(\xi)=\int_\R f(t)\e^{-2\pi\ii t\xi}\,dt
            \quad\text{for all }f\in L^1(\R)\text{ and }\xi\in\R,
        \end{equation}
        we have
        \begin{equation}\label{eq:psi-Fourier}
            \widehat{\psi_\beta}(\xi)
            =2\pi\ii\xi\,\widehat{\varphi_\beta}(\xi)
            =2\pi\ii\xi\sqrt\beta\,\e^{-\pi\beta\xi^2}
            \quad\text{for all }\xi\in\R.
        \end{equation}
        For $\xi\in\R$, define
        \begin{equation}\label{eq:psi-periodized-term}
            P_\beta(\xi)=\sum_{k\in\Z}|\widehat{\psi_\beta}(\xi+k)|^2.
        \end{equation}
        The function $P_\beta$ is one-periodic. Gaussian decay implies that the series defining $P_\beta$ in \eqref{eq:psi-periodized-term} converges uniformly on compact subsets of $\R$, and therefore $P_\beta$ is continuous. Moreover,
        \begin{equation}
            |\widehat{\psi_\beta}(\xi+k)|^2
            =4\pi^2\beta(\xi+k)^2\e^{-2\pi\beta(\xi+k)^2}
            \quad\text{for all }\xi\in\R\text{ and }k\in\Z.
        \end{equation}
        By \eqref{eq:psi-Fourier}, $|\widehat{\psi_\beta}(\xi+k)|^2$ only vanishes for $\xi=-k$; hence $P_\beta(\xi)>0$ for every $\xi\in\R$. Gaussian decay also gives a finite upper bound for $P_\beta$. By continuity, periodicity, and strict positivity, there are constants $0<m_\beta\leq M_\beta<\infty$ such that
        \begin{equation}
            m_\beta\leq P_\beta(\xi)\leq M_\beta
            \quad\text{for all }\xi\in[0,1].
        \end{equation}
        For every finitely supported sequence $u=(u_k)_{k\in\Z}$, Plancherel and periodization yield
        \begin{align}\label{eq:periodized-synthesis-norm}
            \left\|\sum_{k\in\Z}u_k \psi_\beta(\cdot-k)\right\|_{L^2}^2
            &= \int_\R \left|\sum_{k\in\Z}u_k\e^{-2\pi\ii k\xi}\right|^2 |\widehat{\psi_\beta}(\xi)|^2\,d\xi \\
            &= \int_0^1 \left|\sum_{k\in\Z}u_k\e^{-2\pi\ii k\xi}\right|^2 P_\beta(\xi)\,d\xi.
        \end{align}
        Parseval's identity, therefore, yields
        \begin{equation}\label{eq:psi-stability-estimates}
            m_\beta\|u\|_{\ell^2}^2
            \leq
            \left\|\sum_{k\in\Z}u_k\psi_\beta(\cdot-k)\right\|_{L^2}^2
            \leq
            M_\beta\|u\|_{\ell^2}^2.
        \end{equation}
        Since the finitely supported sequences are dense in $\ell^2(\Z)$, the upper estimate in \eqref{eq:psi-stability-estimates} extends the synthesis map continuously to all of $\ell^2(\Z)$. Passing to the limit in both estimates in \eqref{eq:psi-stability-estimates} proves the same inequalities for every $u\in\ell^2(\Z)$, and hence the integer shifts of $\psi_\beta$ are stable. Every weak limit of translates of $\delta\Z$ is itself a translate of $\delta\Z$. Indeed, let $\Gamma\subset\R$ and let $(x_j)_{j\geq1}$ be a sequence in $\R$ such that
        \begin{equation}\label{eq:weak-limit-Gamma}
            x_j+\delta\Z \xrightarrow{w} \Gamma.
        \end{equation}
        Write $x_j=m_j\delta+r_j$, where $m_j\in\Z$ and $r_j\in[0,\delta)$. After passing to a subsequence, we may assume that $r_j\to r$ for some $r\in[0,\delta]$. Since $m_j\delta+\delta\Z=\delta\Z$, we have
        \begin{equation}\label{eq:lattice-remainder-identity}
            x_j+\delta\Z=r_j+\delta\Z
            \quad\text{for all }j\geq1.
        \end{equation}
        It follows from \eqref{eq:lattice-remainder-identity} that
        \begin{equation}\label{eq:weak-limit-lattice}
            x_j+\delta\Z \xrightarrow{w} r+\delta\Z. 
        \end{equation}
        By \eqref{eq:weak-limit-Gamma} and \eqref{eq:weak-limit-lattice}, both $\Gamma$ and $r+\delta\Z$ are weak limits of the same subsequence. Moreover, both sets are $\delta$-separated: this is immediate for $r+\delta\Z$, while $\Gamma$ inherits this property from the translates $x_j+\delta\Z$ through the two-sided approximation in the definition of weak convergence. Hence, both sets are closed. If they were distinct, a point of one would have positive distance from the other, contradicting the fact that the same subsequence converges weakly to both. Therefore, $\Gamma=r+\delta\Z$. If $r=\delta$, then $r+\delta\Z=\delta\Z$. Hence, in every case $\Gamma$ is a translate of~$\delta\Z$.

        Since $-\delta\Z=\delta\Z$, Theorem~\ref{thm:GRS-frame}, applied to the non-frame system $\cG(\psi_\beta,\delta\Z\times\Z)$, gives a set $\Gamma\in\Hullw(\delta\Z)$ and a nonzero sequence
        $c=(c_k)_{k\in\Z}\in\ell^\infty(\Z)$ such that
        \begin{equation}\label{eq:annihilating-sequence}
            \sum_{k\in\Z}c_k \psi_\beta(\gamma-k)=0
            \quad\text{for all }\gamma\in\Gamma.
        \end{equation} 
        By the preceding paragraph, $\Gamma=x_*+\delta\Z$ for some $x_*\in\R$. Therefore,
        \begin{equation}\label{eq:annihilation-grid}
            0=\sum_{k\in\Z}c_k \psi_\beta(x_*+n\delta-k)
            \quad\text{for all }n\in\Z.
        \end{equation} 
        Finally, as $c\in\ell^\infty(\Z)$, the series in \eqref{eq:F-def} converges normally, and hence locally uniformly, on $\C$ by the Weierstrass criterion; see \cite[pp.~104--107]{Remmert}. As each summand is entire, $F$ is entire.

        To see that $F$ is nonzero, define
        \begin{equation}\label{eq:nonframe-Laurent-function}
            G(w)=\sum_{k\in\Z}c_k\e^{-\pi k^2/\beta}w^k,
            \qquad \text{ for } w\in\C^* = \C \setminus \{0\}.
        \end{equation}
        The series in \eqref{eq:nonframe-Laurent-function} converges normally on compact annuli and therefore defines a holomorphic function on $\C^*$. Completing the square gives
        \begin{equation}\label{eq:nonframe-Laurent-identity}
            \e^{\pi z^2/\beta}F(z)=G\bigl(\e^{2\pi z/\beta}\bigr),
            \qquad \text{ for }  z\in\C.
        \end{equation}
        If $F\equiv0$, the surjectivity of the exponential map $z\mapsto\e^{2\pi z/\beta}$ onto $\C^*$ would imply $G\equiv0$. Uniqueness of Laurent coefficients would then give $c_k=0$ for all $k\in\Z$, contradicting the choice of $c$. Consequently, $F\not\equiv0$.

        The series defining $F$ in \eqref{eq:F-def} may now be differentiated termwise, and the derivative converges locally uniformly on $\C$. Thus $F$ is entire and
        \begin{equation}\label{eq:F-derivative}
            F'(z)=\sum_{k\in\Z}c_k \psi_\beta(z-k)
            \quad\text{for all }z\in\C.
        \end{equation}
        Combining \eqref{eq:annihilation-grid} and \eqref{eq:F-derivative} then gives the claim
        \begin{equation} 
            F'(x_*+n\delta)
            =\sum_{k\in\Z}c_k \psi_\beta(x_*+n\delta-k)
            = 0 
            \quad\text{for all }n\in\Z,
        \end{equation}
    \end{proof}

\section{Wronskian amplification}\label{sec:Wronskian}
Throughout this section, let $\beta>0$, $0<\delta<1$, and let $c=(c_k)_{k\in\Z}\in\ell^\infty(\Z)$ be nonzero. We denote by $F$ the entire function defined in \eqref{eq:F-def}. We aim to amplify the critical-point condition \eqref{eq:critical-grid} by forming the Wronskian of finitely many translates of $F$.

The argument has two separate components. First, the translates are eigenfunctions of a common weighted translation operator. Distinctness of the corresponding eigenvalues implies their linear independence and hence the nonvanishing of their Wronskian. Second, the common critical points of the translates force this Wronskian to vanish to high order. More precisely, for every integer $N\geq2$ for which the corresponding characters are pairwise distinct, we obtain a nonzero Wronskian that has a zero of multiplicity at least $N-1$ at every point of the shifted lattice $x_*+\delta\Z$. We refer to this construction as \emph{Wronskian amplification}.

\begin{definition}
    Let $N \in \N$, let $\Omega\subset\C$ be a connected open set, and let $f_0,\ldots,f_{N-1}$ be holomorphic on $\Omega$. Their Wronskian matrix is
    \begin{equation}\label{eq:general-wronskian-matrix}
        M = \begin{pmatrix}
            f_0 & \ldots & f_{N-1}\\
            f_0' & \ldots & f_{N-1}'\\
            \vdots & \ddots & \vdots\\
            f_0^{(N-1)} & \ldots & f_{N-1}^{(N-1)}
        \end{pmatrix}
    \end{equation}
    The \emph{Wronskian} is the determinant of the matrix $M$ in \eqref{eq:general-wronskian-matrix}, i.e., the holomorphic function
    \begin{equation}\label{eq:general-wronskian}
        \Wr(f_0,\ldots,f_{N-1})(z)
        =\det\left(f_j^{(r)}(z)\right)_{0\leq r,j<N}
        \quad\text{for all }z\in\Omega.
    \end{equation}
\end{definition}

We recall the analytic Wronskian criterion; see \cite[\S~4]{Bocher} and, for a modern proof, \cite{BostanDumas}.
\begin{theorem}\label{thm:wronskian-criterion}
    Let $N \in \N$, $\Omega\subset\C$ a connected open set. Let $f_0,\ldots,f_{N-1}$ be holomorphic on $\Omega$. Then the functions $f_0,\ldots,f_{N-1}$ are linearly dependent over $\C$ if and only if
    \begin{equation}
        \Wr(f_0,\ldots,f_{N-1})\equiv0
        \quad\text{on }\Omega.
    \end{equation}
    Equivalently, they are linearly independent over $\C$ if and only if their Wronskian does not vanish identically.
\end{theorem}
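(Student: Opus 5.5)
The plan is to prove the two implications separately. The direction ``dependent $\Rightarrow$ Wronskian vanishes'' is a pointwise linear-algebra observation. The converse reduces to the existence-and-uniqueness theory of linear ODEs with holomorphic coefficients, followed by the identity theorem.

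\emph{Dependence implies vanishing.} Suppose $\sum_j \lambda_j f_j\equiv 0$ on $\Omega$ with $(\lambda_0,\ldots,\lambda_{N-1})\neq 0$. Differentiating this identity $r$ times gives $\sum_j\lambda_j f_j^{(r)}\equiv0$ for $0\le r<N$. Hence at every $z\in\Omega$ the nonzero vector $\lambda$ lies in the kernel of the matrix $M(z)$ in \eqref{eq:general-wronskian-matrix}. Therefore $\det M(z)=0$ for all $z$, that is, $\Wr(f_0,\ldots,f_{N-1})\equiv0$.

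\emph{Vanishing implies dependence.} Assume $\Wr(f_0,\ldots,f_{N-1})\equiv0$. If some $f_j\equiv0$ we are done, so we may assume each $f_j\not\equiv0$. Let $r$ be the largest integer $1\le r\le N$ such that some $r$ of the functions have a Wronskian that is not identically zero; $r\ge1$ because a single function's Wronskian is itself. By hypothesis $r<N$. After relabeling, $W_r:=\Wr(f_0,\ldots,f_{r-1})\not\equiv0$, while by maximality $\Wr(f_0,\ldots,f_{r-1},f_r)\equiv0$.

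Since $W_r$ is holomorphic and not identically zero, there is a disc $U\subset\Omega$ on which $W_r$ has no zeros. On $U$ consider the operator
\begin{equation}
  L(y)=\frac{\Wr(f_0,\ldots,f_{r-1},y)}{W_r}.
\end{equation}
Expanding the determinant along its last column shows that $L(y)=y^{(r)}+\sum_{s<r}a_s y^{(s)}$, with coefficients $a_s$ holomorphic on $U$. Thus $L(y)=0$ is a monic linear ODE of order $r$. By the Cauchy existence and uniqueness theorem on the simply connected set $U$, its solution space has dimension exactly $r$.

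Each $f_j$ with $j<r$ solves $L(y)=0$, since a determinant with a repeated column vanishes. These $r$ solutions are linearly independent on $U$ because their Wronskian $W_r$ is nonzero there; note this uses only the easy direction already proved. Hence they form a basis of the solution space. The function $f_r$ also solves $L(y)=0$ on $U$, so $f_r=\sum_{j<r}\mu_jf_j$ on $U$ for some constants $\mu_j$.

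By the identity theorem and the connectedness of $\Omega$, this relation holds on all of $\Omega$. This gives a nontrivial linear relation among $f_0,\ldots,f_r$, and hence among $f_0,\ldots,f_{N-1}$.

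The only delicate step is the choice of $r$ together with the restriction to a zero-free disc of $W_r$. This is what makes the ODE regular: its leading coefficient $W_r$ must not vanish. The rest is standard linear algebra and analytic continuation.
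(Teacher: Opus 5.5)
Your proof is correct and complete. The paper does not prove this theorem itself; it only cites B\^ocher and Bostan--Dumas, so the comparison is with those sources.

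You give the classical ODE argument. You take a maximal subfamily whose Wronskian $W_r$ is not identically zero, and pass to a disc where $W_r$ has no zeros, so that $y\mapsto\Wr(f_0,\ldots,f_{r-1},y)/W_r$ is a regular monic operator of order $r$. You deduce that $f_r$ lies in the span of $f_0,\ldots,f_{r-1}$ on that disc, and then propagate by the identity theorem. Only the bound $\dim\leq r$ on the solution space is actually needed, and for holomorphic coefficients it is immediate from the recursive determination of Taylor coefficients.

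Bostan--Dumas argue instead formally at a single point $\lambda$. If the germs are independent, an invertible linear change of the family (which multiplies the Wronskian by a nonzero constant) gives germs with pairwise distinct orders $v_0<\cdots<v_{N-1}$ at $\lambda$. The lowest-order term of the Wronskian is then the product of the leading coefficients times $\prod_{i<j}(v_j-v_i)\,(z-\lambda)^{\sum_i v_i-\binom N2}$, a Vandermonde factor that cannot vanish.

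What your route buys is that analyticity enters only through the final identity-theorem step. Without that step, the same argument shows that $C^{N-1}$ functions with identically vanishing Wronskian are dependent on any interval where the Wronskian of $N-1$ of them has no zeros (B\^ocher's criterion).

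What the valuation route buys is the exact formula $\ord_\lambda\Wr(f_0,\ldots,f_{N-1})=\sum_i(v_i-i)$, which is directly relevant to this paper. If all $f_j'(\lambda)=0$, no element of the span vanishes to order exactly $1$ at $\lambda$. Hence $v_i\geq i+1$ for $i\geq 1$ and $\ord_\lambda\Wr\geq N-1$, which is Lemma~\ref{lem:wronskian-amplification} obtained as a byproduct. Your argument gives no information about orders of vanishing, so with it the paper still needs the separate counting argument of that lemma.
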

We note that analyticity is essential. For functions that are merely smooth, an identically vanishing Wronskian need not imply linear dependence; see again \cite[\S~4]{Bocher} and \cite{BostanDumas}.

We now specialize the Wronskian construction to the Gaussian shift-invariant function $F$ from \eqref{eq:F-def}. For an integer $N\geq2$ and every $j\in\{0,\ldots,N-1\}$, define the entire function
\begin{equation}\label{eq:F-translates}
    F_j(z):=F(z-j\delta)
    \quad\text{for all }z\in\C,
\end{equation}
and denote the Wronskian of the translates $F_j$ of the Gaussian shift-invariant function $F$ by
\begin{equation}\label{eq:wronskian}
    \cW_N:=\Wr(F_0,\ldots,F_{N-1}).
\end{equation}
Equivalently,
\begin{equation}
    \cW_N(z)=\det\left(F_j^{(r)}(z)\right)_{0\leq r,j<N}
    \quad\text{for all }z\in\C.
\end{equation}

\begin{lemma}\label{lem:analytic-F}
    Let $c = (c_k)_{k\in \Z}\in\ell^\infty(\Z)$ be nonzero, and let $F$ be defined as in \eqref{eq:F-def}. Then the following conditions hold.
    \begin{enumerate}[(i)]
        \item\label{item:F_convergence}
        The series defining $F$, as well as any series obtained by differentiating it termwise any finite number of times, converge locally uniformly on $\C$. In particular, $F$ is entire and nonzero.
        \item\label{item:F_automorphy}
        The function $F$ satisfies the automorphy relation
        \begin{equation}\label{eq:quasiperiod-F}
          F(z+\ii\beta)=\e^{\pi\beta-2\pi\ii z}F(z)
          \eqqcolon \rho_\beta(z) F(z)
          \quad\text{for all }z\in\C.
        \end{equation}
        \item\label{item:F_bounded}
        For every integer $r\geq0$,
        \begin{equation}\label{eq:strip-bound-F}
          \sup_{\substack{x\in\R\\0\leq y\leq\beta}}
          |F^{(r)}(x+\ii y)|<\infty.
        \end{equation}
    \end{enumerate}
\end{lemma}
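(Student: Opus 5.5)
We prove the three items separately, working term by term with the Gaussian building blocks $\varphi_\beta(z-k)=\e^{-\pi(z-k)^2/\beta}$. For (i), write $z=x+\ii y$ and compute
\begin{equation}
    \bigl|\e^{-\pi(z-k)^2/\beta}\bigr|=\e^{-\pi((x-k)^2-y^2)/\beta}.
\end{equation}
On a compact set $K\subset\C$ with $|x|,|y|\leq R$ this is at most $\e^{\pi R^2/\beta}\e^{-\pi(|k|-R)^2/\beta}$ once $|k|\geq R$.

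The $r$-th derivative of $\varphi_\beta(z-k)$ is $p_r(z-k)\varphi_\beta(z-k)$, where $p_r$ is a polynomial of degree $r$ (a scaled Hermite polynomial). Hence the termwise differentiated summands are bounded on $K$ by $\|c\|_{\ell^\infty}C_{r,R}(1+|k|)^r\e^{-\pi(|k|-R)^2/\beta}$. This bound is summable in $k$, so the Weierstrass M-test gives normal and hence locally uniform convergence. It follows that $F$ is entire and $F^{(r)}$ equals the termwise derivative. Nonvanishing of $F$ is exactly the Laurent-series argument already given in the proof of Proposition~\ref{prop:nonframe-reduction}: use $\e^{\pi z^2/\beta}F(z)=G(\e^{2\pi z/\beta})$ together with uniqueness of Laurent coefficients.

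For (ii), we verify the identity on each summand, which suffices since the series converges absolutely. Expanding the square,
\begin{equation}
    -\frac{\pi(z+\ii\beta-k)^2}{\beta}=-\frac{\pi(z-k)^2}{\beta}-2\pi\ii(z-k)+\pi\beta.
\end{equation}
Because $\e^{2\pi\ii k}=1$, each term is multiplied by the same factor $\e^{\pi\beta-2\pi\ii z}$, independent of $k$. Summing over $k$ gives \eqref{eq:quasiperiod-F}.

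For (iii), which is the only step requiring any care, the bound must be uniform in $x\in\R$ rather than on compact sets. In the strip $0\leq y\leq\beta$ we have $\e^{\pi y^2/\beta}\leq\e^{\pi\beta}$, so
\begin{equation}
    |F^{(r)}(x+\ii y)|\leq\|c\|_{\ell^\infty}\,\e^{\pi\beta}\sum_{k\in\Z}|p_r(x-k+\ii y)|\,\e^{-\pi(x-k)^2/\beta}.
\end{equation}
Next, $|p_r(x-k+\ii y)|\leq C_r(1+|x-k|)^r$ with $C_r$ uniform in $y\in[0,\beta]$. The remaining sum $\sum_k(1+|x-k|)^r\e^{-\pi(x-k)^2/\beta}$ is a one-periodic function of $x$. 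It is continuous by the same M-test as in (i), and is therefore bounded by its maximum over $x\in[0,1]$. This yields \eqref{eq:strip-bound-F}.

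The main point to get right is that $C_r$ controls the Hermite-type polynomial factor uniformly in $y$ over the strip. This follows by bounding each coefficient of $p_r$ and using $|y|\leq\beta$.
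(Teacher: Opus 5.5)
Your proposal is correct and follows essentially the same route as the paper. In (i) you use Gaussian majorants of the form $(1+|k|)^r\e^{-\gamma k^2}$ for normal convergence of the termwise-differentiated series, and the Laurent-coefficient argument for nonvanishing. In (ii) you verify the identity on each summand using $\e^{2\pi\ii k}=1$. In (iii) you bound by the continuous one-periodic majorant $\sum_k(1+|x-k|)^r\e^{-\pi(x-k)^2/\beta}$. You are just slightly more explicit than the paper in (iii): you isolate the factor $\e^{\pi y^2/\beta}\leq\e^{\pi\beta}$ and the uniform-in-$y$ polynomial bound, which the paper absorbs into its constant $C_{r,\beta}$.
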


\begin{proof}
    We show \eqref{item:F_convergence}. Every derivative of a Gaussian is a polynomial times the same Gaussian. Fix an integer $r\geq0$. On each compact subset of $\C$, the $k$-th summand and its $r$-th derivative are therefore bounded, for suitable constants $C,\gamma>0$, by a summable sequence of the form $C(1+|k|)^r\e^{-\gamma k^2}$. This proves normal convergence and term-by-term differentiation. Using the arguments surrounding \eqref{eq:nonframe-Laurent-function} for general $c$, we conclude that $F\equiv0$ would imply $c=0$. Hence, $F\not\equiv0$.
    
    We now prove \eqref{item:F_automorphy}. A direct computation shows that
    \begin{equation}\label{eq:Gaussian-summand-quasiperiodicity}
        \e^{-\pi(z+\ii\beta-k)^2/\beta}
        =\e^{\pi\beta-2\pi\ii z}\e^{2\pi\ii k}
        \e^{-\pi(z-k)^2/\beta}
        \quad\text{for all }z\in\C\text{ and }k\in\Z.
    \end{equation}
    Summing \eqref{eq:Gaussian-summand-quasiperiodicity} over $k\in\Z$ gives \eqref{eq:quasiperiod-F}.
    
    Lastly, we prove \eqref{item:F_bounded}. For $x\in\R$ and $0\leq y\leq\beta$, each differentiated summand is bounded, for a suitable constant $C_{r,\beta}>0$, by
    \begin{equation}\label{eq:differentiated-Gaussian-bound}
        \|c\|_{\ell^\infty}C_{r,\beta}
        (1+|x-k|)^r\e^{-\pi(x-k)^2/\beta}.
    \end{equation}
    Summing over $k\in\Z$ gives a continuous $1$-periodic majorant as a function of $x$. It is therefore bounded on $\R$, which proves \eqref{eq:strip-bound-F}.
\end{proof}

The automorphy relation in \eqref{eq:quasiperiod-F} can be written using the weighted vertical shift $V_\beta$ on entire functions, defined by
\begin{equation}\label{eq:weighted-vertical-translation}
    (V_\beta H)(z)=\e^{-\pi\beta+2\pi\ii z}H(z+\ii\beta)
    = \rho_\beta(z)^{-1} H(z+\ii\beta)
    \quad\text{for all }z\in\C.
\end{equation}
The prefactor $\rho_\beta(z)^{-1}$ in \eqref{eq:weighted-vertical-translation} cancels the common automorphy factor $\rho_\beta(z)$ in \eqref{eq:quasiperiod-F}. Consequently, with $\chi_j=\e^{2\pi\ii j\delta}$,
\begin{equation}\label{eq:V-eigenfunctions}
  V_\beta F_j=\chi_jF_j
  \quad\text{for all }j\in\{0,\ldots,N-1\}.
\end{equation}
Thus, the functions $F_j$ are eigenfunctions of a single operator, and the distinctness of their characters will give their linear independence. The operator $V_\beta$ will be used only for this linear-independence argument.

\begin{remark}
    The vertical-shift operator $V_\beta$ is precisely a Bargmann--Fock weighted translation $\tau_\zeta$ in the normalization
    \begin{equation}\label{eq:Fock-weighted-translation}
        (\tau_\zeta H)(z)
        =\e^{\alpha z\overline\zeta-\alpha|\zeta|^2/2}H(z-\zeta),
        \quad \text{for } z, \zeta \in \C, \, \alpha>0. 
    \end{equation}
    In \eqref{eq:Fock-weighted-translation}, taking $\alpha=2\pi/\beta$ and $\zeta=-\ii\beta$ gives $\tau_\zeta=V_\beta$; see \cite[Chapter~1, Section~6]{FollandPhase} for the Bargmann--Fock representation and \cite[Chapter~2]{ZhuFock} for weighted translations. For the related Heisenberg-group and theta-function automorphy viewpoint, see \cite[Chapter~I]{MumfordTheta}.
\end{remark}

For a function $H$ that is holomorphic in a neighborhood of $\lambda$ and not identically zero, we write $\ord_\lambda H$ for the order of vanishing of $H$ at $\lambda$. Thus $\ord_\lambda H$ is the unique integer $m\geq0$ for which there exist a neighborhood $U$ of $\lambda$ and a function $h$ holomorphic on $U$ such that
\begin{equation}
    H(z)=(z-\lambda)^m h(z),
    \qquad \text{ for all } z\in U,
\end{equation}
and $h(\lambda)\neq0$. Equivalently, $\ord_\lambda H$ is the smallest $m\geq0$ such that $H^{(m)}(\lambda)\neq0$.

Lemma~\ref{lem:wronskian-amplification} below is a special case of the classical ramification-weight formula for Wronskians of finite-dimensional linear systems, see \cite[Theorem~15(ii)]{Laksov1984} and \cite[\S3.12]{GattoScherbak2012}. We give a direct proof adapted to the hypothesis that functions have common critical points used here.  The determinants $D(r_0,\ldots,r_{N-1};z)$ used in the proof below are generalized Wronskians, up to a permutation of their rows. The expansion of a derivative of an ordinary Wronskian into such determinants is standard, see, e.g.,\cite[Proposition~1]{Towse2000}.

\begin{lemma}
    \label{lem:wronskian-amplification}
    Let $N \in \N$, $N \geq 2$, let $U\subset\C$ be a connected open neighborhood of $\lambda\in\C$,
    and let $f_0,\ldots,f_{N-1}$ be holomorphic on $U$. Denote the Wronskian by
    \begin{equation}\label{eq:Wr-amplification-W}
        \cW(z)
        := \Wr(f_0,\ldots,f_{N-1})(z)
        = \det\bigl(f_j^{(r)}(z)\bigr)_{0\leq r,j<N},
        \qquad \text{ for } z\in U.
    \end{equation}
    Suppose that
    \begin{equation}\label{eq:Wr-amplification-hypothesis}
        f_j'(\lambda)=0,
        \qquad \text{ for all } j=0,\ldots,N-1.
    \end{equation}
    Then
    \begin{equation}\label{eq:Wr-derivative-vanishing}
        \cW^{(m)}(\lambda)=0,
        \qquad \text{ for all } m=0,\ldots,N-2.
    \end{equation}
    Consequently, if $\cW\not\equiv 0$, then
    \begin{equation}\label{eq:Wr-order}
        \ord_{\lambda}\cW\geq N-1.
    \end{equation}
\end{lemma}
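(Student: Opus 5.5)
The plan is to expand derivatives of the Wronskian by multilinearity of the determinant in its rows. For a tuple $(r_0,\ldots,r_{N-1})$ of nonnegative integers, set
\begin{equation}
    D(r_0,\ldots,r_{N-1};z)=\det\bigl(f_j^{(r_i)}(z)\bigr)_{0\leq i,j<N},
\end{equation}
so that $\cW(z)=D(0,1,\ldots,N-1;z)$. Differentiating one row at a time (Leibniz rule for determinants) gives $\tfrac{d}{dz}D(r_0,\ldots,r_{N-1};z)=\sum_{i=0}^{N-1}D(r_0,\ldots,r_i+1,\ldots,r_{N-1};z)$.

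By induction on $m$, $\cW^{(m)}$ is then a nonnegative integer combination of determinants $D(r_0,\ldots,r_{N-1};z)$ with $r_i=i+s_i$, where $s_i\geq0$ and $\sum_i s_i=m$. Only the bookkeeping of this multi-index matters, not the coefficients.

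Evaluate each such term at $\lambda$. By \eqref{eq:Wr-amplification-hypothesis}, any row with index $r_i=1$ vanishes at $\lambda$, and if two rows share the same index the determinant vanishes identically. So a term can survive at $\lambda$ only if the $r_i$ are pairwise distinct and none equals $1$. Since $r_0\geq0$, this forces the $N$ distinct values to lie in $\{0\}\cup\{2,3,\ldots\}$.

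The main step is the counting argument. The minimal possible sum of $N$ distinct integers from $\{0,2,3,4,\ldots\}$ is
\begin{equation}
    0+2+3+\cdots+N=\frac{N(N+1)}{2}-1.
\end{equation}
The $r_i$ themselves have sum $\sum_i (i+s_i)=\frac{N(N-1)}{2}+m$. Survival therefore requires
\begin{equation}
    \frac{N(N-1)}{2}+m\geq \frac{N(N+1)}{2}-1,
\end{equation}
that is, $m\geq N-1$. Hence every term vanishes at $\lambda$ for $m\leq N-2$, which proves \eqref{eq:Wr-derivative-vanishing}.

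This counting step is the only nontrivial part. It needs care to use the row-index multiset rather than the ordered tuple: distinctness and avoidance of $1$ are the only properties used, so ordering is irrelevant.

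Finally, if $\cW\not\equiv0$, its Taylor expansion at $\lambda$ (in the connected set $U$) has first nonzero coefficient of index at least $N-1$. By the characterization of $\ord_\lambda$ via the first nonvanishing derivative, this gives \eqref{eq:Wr-order}.
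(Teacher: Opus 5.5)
Your proposal is correct and follows essentially the same route as the paper. Both expand $\cW^{(m)}$ by row-multilinearity into determinants with row indices $r_k = k+\alpha_k$, discard terms with a repeated index or a row of first derivatives, and use the same sum comparison, $0+2+\cdots+N$ versus $\frac{N(N-1)}{2}+m$, to force $m\geq N-1$.
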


\begin{proof}
    For nonnegative integers $r_0,\ldots,r_{N-1}$ and $z\in U$, set
    \begin{equation}\label{eq:D-determinant}
        D(r_0,\ldots,r_{N-1};z)
        :=
        \det\bigl(f_j^{(r_k)}(z)\bigr)_{0\leq k,j<N}.
    \end{equation}
    Thus
    \begin{equation}\label{eq:W-as-D}
        \cW(z)=D(0,1,\ldots,N-1;z).
    \end{equation}
    The point of the underlying argument now is that, after fewer than $N-1$ differentiations, every determinant arising from the Leibniz rule either has two equal rows or contains the row of first derivatives, which vanishes at $\lambda$.
    
    We start with repeated differentiation of \eqref{eq:W-as-D}, which, together with the multilinearity of the determinant in its rows, gives
    \begin{align}\label{eq:expansionOfWm}
        \cW^{(m)}(z)
        =
        \sum_{\substack{\alpha\in\N_0^N\\ |\alpha|=m}}
        \binom{m}{\alpha}
        D\bigl(
            \alpha_0,
            1+\alpha_1,
            \ldots,
            N-1+\alpha_{N-1};
            z
        \bigr), \quad \text{ for all } z \in U,
    \end{align}
    where
    \begin{equation}
        |\alpha|:=\alpha_0+\cdots+\alpha_{N-1},
        \qquad
        \binom{m}{\alpha}
        :=
        \frac{m!}{\alpha_0!\cdots\alpha_{N-1}!}.
    \end{equation}
    
    Fix $m\in\{0,\ldots,N-2\}$ and consider one term in the sum \eqref{eq:expansionOfWm}. Write
    \begin{equation}\label{eq:row-indices}
        r_k:=k+\alpha_k,
        \qquad \text{ for all } k=0,\ldots,N-1.
    \end{equation}
    If two of the integers $r_0,\ldots,r_{N-1}$ coincide, then the corresponding determinant in \eqref{eq:expansionOfWm}, defined in \eqref{eq:D-determinant}, has two equal rows and therefore vanishes.
    
    Suppose instead that $r_0,\ldots,r_{N-1}$ are pairwise distinct. If none of them were equal to $1$, then they would be $N$ distinct nonnegative integers avoiding $1$. Their sum would therefore satisfy
    \begin{equation}\label{eq:row-index-lower-bound}
        \sum_{k=0}^{N-1}r_k
        \geq
        0+2+3+\cdots+N
        =
        \frac{N(N-1)}{2}+N-1.
    \end{equation}
    On the other hand,
    \begin{equation}\label{eq:row-index-upper-bound}
        \sum_{k=0}^{N-1}r_k
        =
        \sum_{k=0}^{N-1}k+\sum_{k=0}^{N-1}\alpha_k
        =
        \frac{N(N-1)}{2}+m
        \leq
        \frac{N(N-1)}{2}+N-2,
    \end{equation}
    which contradicts \eqref{eq:row-index-lower-bound}. Hence, one of the row indices $r_k$ defined in \eqref{eq:row-indices} must equal $1$.
    
    The row whose index equals $1$ is
    \begin{equation}\label{eq:first-derivative-row}
        \bigl(f_0'(\lambda),\ldots,f_{N-1}'(\lambda)\bigr).
    \end{equation}
    By \eqref{eq:Wr-amplification-hypothesis}, we have that \eqref{eq:first-derivative-row} is zero. Thus every term in the expansion \eqref{eq:expansionOfWm} of $\cW^{(m)}(\lambda)$ vanishes, proving \eqref{eq:Wr-derivative-vanishing}. If $W\not\equiv0$, the vanishing of these derivatives gives \eqref{eq:Wr-order}, i.e., $\ord_\lambda \cW \geq N-1$.
\end{proof}

\begin{lemma}
\label{lem:independence-multiplicity}
Let $N \in \N$, $N \geq 2$ and assume that the characters in \eqref{eq:V-eigenfunctions} satisfy
\begin{equation}\label{eq:distinct-characters}
  \chi_0,\chi_1,\ldots,\chi_{N-1}
  \quad\text{are pairwise distinct}.
\end{equation}
Then
\begin{equation}\label{eq:alternative-W-nonzero}
    \cW_N\not\equiv0.
\end{equation}
If, in addition, there exists $x_*\in\R$ such that
\eqref{eq:critical-grid} holds, then
\begin{equation}\label{eq:alternative-W-multiplicity}
    \ord_{x_*+n\delta}\cW_N
    \geq N-1,
    \qquad \text{ for all } n\in\Z.
\end{equation}
\end{lemma}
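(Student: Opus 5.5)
The argument has two parts: linear independence of the translates gives $\cW_N\not\equiv0$, and then Lemma~\ref{lem:wronskian-amplification} applied pointwise gives the multiplicity bound.

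\textbf{Step 1: linear independence.} Suppose $\sum_{j=0}^{N-1} a_j F_j\equiv 0$ with $a_j\in\C$. The operator $V_\beta$ in \eqref{eq:weighted-vertical-translation} is linear on entire functions. Applying it $\ell$ times and using \eqref{eq:V-eigenfunctions} gives
\begin{equation}
\sum_{j=0}^{N-1} a_j\chi_j^{\ell}F_j\equiv0, \qquad \ell=0,\ldots,N-1.
\end{equation}
Fix $z\in\C$ and read this as a linear system in the unknowns $a_jF_j(z)$. Its matrix is the Vandermonde matrix $(\chi_j^\ell)_{\ell,j}$, which is invertible by \eqref{eq:distinct-characters}. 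Hence $a_jF_j(z)=0$ for every $z$ and every $j$.

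Each $F_j$ is a translate of $F$, and $F\not\equiv0$ by Lemma~\ref{lem:analytic-F}\eqref{item:F_convergence}. So $F_j\not\equiv0$, and therefore $a_j=0$ for all $j$. Thus $F_0,\ldots,F_{N-1}$ are linearly independent over $\C$. Theorem~\ref{thm:wronskian-criterion} on the connected set $\Omega=\C$ then gives $\cW_N\not\equiv0$.

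\textbf{Step 2: common critical points.} Fix $n\in\Z$ and set $\lambda=x_*+n\delta$. For each $j$,
\begin{equation}
F_j'(\lambda)=F'(\lambda-j\delta)=F'\bigl(x_*+(n-j)\delta\bigr)=0
\end{equation}
by \eqref{eq:critical-grid}, since $n-j\in\Z$. So the hypothesis \eqref{eq:Wr-amplification-hypothesis} holds at $\lambda$.

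\textbf{Step 3: multiplicity.} Apply Lemma~\ref{lem:wronskian-amplification} with $U=\C$ and $f_j=F_j$. Together with $\cW_N\not\equiv0$ from Step 1, it gives $\ord_\lambda\cW_N\geq N-1$, which is \eqref{eq:alternative-W-multiplicity}.

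The only step with real content is Step 1. One must check that $V_\beta$ is linear and maps entire functions to entire functions, and that the eigenrelation \eqref{eq:V-eigenfunctions} holds. The latter comes from \eqref{eq:quasiperiod-F} evaluated at $z-j\delta$: the factor $\e^{2\pi\ii j\delta}$ is exactly $\rho_\beta(z-j\delta)/\rho_\beta(z)$. Everything else is direct substitution.
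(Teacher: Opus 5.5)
Your proof is correct and follows the paper's argument: the $F_j$ are linearly independent because they are eigenfunctions of $V_\beta$ with distinct characters, the analytic Wronskian criterion then gives $\cW_N\not\equiv0$, and Lemma~\ref{lem:wronskian-amplification} applies at each $\lambda=x_*+n\delta$. The only difference is in how independence is shown: you invert the Vandermonde system coming from $V_\beta^\ell$, $\ell=0,\ldots,N-1$, while the paper applies $\prod_{\ell\neq j}(V_\beta-\chi_\ell I)$ to isolate $a_jF_j$; both are the standard distinct-eigenvalue argument.
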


\begin{proof}
By \eqref{eq:V-eigenfunctions}, each $F_j$ is an eigenfunction of $V_\beta$ with character $\chi_j$. Each $F_j$ is nonzero, and the characters are pairwise distinct by assumption. Suppose that 
\begin{equation}
    \sum_{k=0}^{N-1}a_kF_k=0. 
\end{equation}
Fix $j\in\{0,\ldots,N-1\}$ and apply
\begin{equation}
    \prod_{\substack{0\leq\ell<N \\ \ell\neq j}} (V_\beta-\chi_\ell I)
\end{equation}
to this identity. Since $V_\beta F_k=\chi_kF_k$, all terms with $k\neq j$ vanish and the remaining term is
\begin{equation}
    a_j \prod_{\substack{0\leq\ell<N \\ \ell\neq j}} (\chi_j-\chi_\ell)F_j.
\end{equation}
The characters are pairwise distinct and $F_j$ is nonzero, so $a_j=0$. As $j$ was arbitrary, $F_0,...,F_{N-1}$ are linearly independent. The analytic Wronskian criterion, therefore, gives
\begin{equation}\label{eq:alternative-Wronskian-nonzero}
    \cW_N
    =
    \Wr(F_0,\ldots,F_{N-1})
    \not\equiv0.
\end{equation}

Now fix $n\in\Z$ and set
\begin{equation}
    \lambda:=x_*+n\delta.
\end{equation}
For every $j\in\{0,\ldots,N-1\}$,
\begin{equation}\label{eq:Fj-critical-derivative}
    F_j'(\lambda)
    =
    F'(\lambda-j\delta)
    =
    F'\bigl(x_*+(n-j)\delta\bigr)
    =
    0.
\end{equation}
Here, the last equality follows from \eqref{eq:critical-grid}. Lemma~\ref{lem:wronskian-amplification}, applied to $f_j=F_j$ using \eqref{eq:Fj-critical-derivative}, therefore yields
\begin{equation}\label{eq:alternative-local-multiplicity}
    \ord_{\lambda}\cW_N\geq N-1.
\end{equation}
Since $n\in\Z$ was arbitrary, $\cW_N$ has a zero of multiplicity at least $N-1$ at every point of the shifted lattice $x_*+\delta\Z$, proving \eqref{eq:alternative-W-multiplicity}.
\end{proof}

\section{The Wronskian as a Gaussian shift-invariant function}\label{sec:Wronskian_SI_function}
We now show that the Wronskian constructed in the preceding section retains the Gaussian shift-invariant structure needed for the zero-density argument. The automorphy relation \eqref{eq:quasiperiod-F} of $F$ induces a corresponding relation for $\cW_N$. Together with a uniform bound on a fundamental strip, this relation yields an expansion of $\cW_N$ in Gaussian translates. After a suitable translation and dilation, the Wronskian is therefore a Gaussian shift-invariant entire function with bounded coefficients.

For the remainder of this section, set
\begin{equation}\label{eq:sigma-definition}
    \sigma:=\frac{N-1}{2}\delta.
\end{equation}
\begin{lemma}\label{lem:W-automorphy}
    Let $N \in \N$, $N \geq 2$, $\delta \in (0,1)$, and $\beta >0$. Then, for every $z\in\C$, the Wronskian $\cW_N$ satisfies
    \begin{equation}\label{eq:W-automorphy}
        \cW_N(z+\ii\beta)
        = \rho_{\beta}(z-\sigma)^N \cW_N(z)
        =\e^{\pi N\beta-2\pi\ii N(z-\sigma)}\cW_N(z),
    \end{equation}
    and
    \begin{equation}\label{eq:W-strip-bound}
        \sup_{\substack{x\in\R\\0\leq y\leq\beta}}|\cW_N(x+\ii y)|<\infty.
    \end{equation}
\end{lemma}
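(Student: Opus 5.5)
The plan is to derive the automorphy of each translate from \eqref{eq:quasiperiod-F}, differentiate it to see how derivatives transform, and then use the row structure of the Wronskian matrix. For each $j$, applying \eqref{eq:quasiperiod-F} at $z-j\delta$ gives
\begin{equation}
    F_j(z+\ii\beta)=\rho_\beta(z-j\delta)F_j(z)=\chi_j\,\rho_\beta(z)F_j(z),
\end{equation}
since $\rho_\beta(z-j\delta)=\e^{\pi\beta-2\pi\ii z}\e^{2\pi\ii j\delta}$. Write $\rho:=\rho_\beta$. Then $\rho'=-2\pi\ii\rho$, so $\rho^{(s)}=(-2\pi\ii)^s\rho$.

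Differentiating $r$ times with the Leibniz rule gives
\begin{equation}
    F_j^{(r)}(z+\ii\beta)=\chi_j\,\rho(z)\sum_{s=0}^{r}\binom{r}{s}(-2\pi\ii)^{r-s}F_j^{(s)}(z).
\end{equation}
Consequently, the Wronskian matrix at $z+\ii\beta$ equals $\rho(z)\,L\,M(z)\,\operatorname{diag}(\chi_0,\ldots,\chi_{N-1})$. Here $L$ is the lower-triangular matrix with entries $\binom{r}{s}(-2\pi\ii)^{r-s}$, and its diagonal entries are all $1$. Taking determinants yields
\begin{equation}
    \cW_N(z+\ii\beta)=\rho(z)^N\Bigl(\prod_{j=0}^{N-1}\chi_j\Bigr)\cW_N(z).
\end{equation}
The character product is $\e^{2\pi\ii\delta\sum_j j}=\e^{2\pi\ii\delta N(N-1)/2}=\e^{2\pi\ii N\sigma}$. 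Also,
\begin{equation}
    \rho(z)^N\e^{2\pi\ii N\sigma}=\e^{\pi N\beta-2\pi\ii N(z-\sigma)}=\rho_\beta(z-\sigma)^N,
\end{equation}
which proves \eqref{eq:W-automorphy}. The unit triangularity of $L$ is what keeps this clean: the lower-order derivative terms disappear by row operations.

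For the strip bound \eqref{eq:W-strip-bound}, each entry $F_j^{(r)}(x+\ii y)=F^{(r)}(x-j\delta+\ii y)$ is uniformly bounded on $x\in\R$, $0\le y\le\beta$, by Lemma~\ref{lem:analytic-F}\eqref{item:F_bounded}, because the real shift $-j\delta$ preserves the strip. Expanding the determinant as a finite sum of products of bounded entries then gives a uniform bound.

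The main obstacle is only bookkeeping: checking that the derivative transformation really is a left multiplication by a unipotent matrix, independent of $j$, and getting the sign and phase of $\rho(z-\sigma)^N$ right. Both reduce to the single identity $\rho'=-2\pi\ii\rho$.
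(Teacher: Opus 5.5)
Your proposal is correct and follows essentially the same route as the paper's proof. You use the automorphy $F_j(z+\ii\beta)=\rho_\beta(z)\chi_jF_j(z)$ and Leibniz differentiation, so the derivative index is acted on by a unit lower-triangular matrix and each column is scaled by $\rho_\beta(z)\chi_j$; then the character product equals $\e^{2\pi\ii N\sigma}$, and the strip bound follows entrywise from Lemma~\ref{lem:analytic-F}\eqref{item:F_bounded}.
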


\begin{proof}
    Recall that for $j\in\{0,\ldots,N-1\}$, the characters in \eqref{eq:V-eigenfunctions} are explicitly given by
    \begin{equation}\label{eq:characters}
        \chi_j=\e^{2\pi\ii j\delta}.
    \end{equation}
    For $j=0,\ldots,N-1$, the automorphy relation \eqref{eq:quasiperiod-F} for $F_j$ reads
    \begin{equation}
        F_j(z+\ii\beta) = \rho_\beta(z)\chi_jF_j(z),
    \end{equation}
    Differentiating $r$ times gives
    \begin{equation} 
        F_j^{(r)}(z+\ii\beta) = \rho_\beta(z)\chi_j \sum_{s=0}^r \binom{r}{s}(-2\pi\ii)^{r-s}F_j^{(s)}(z).
    \end{equation}
    Thus, $\cW_N(z+\ii\beta)$ is obtained from $\cW_N(z)$ by multiplication in the derivative index by a lower triangular matrix with diagonal entries equal to $1$, together with multiplication of the $j$-th column by $\rho_\beta(z)\chi_j$. Taking determinants yields
    \begin{equation}
        \cW_N(z+\ii\beta) = \rho_\beta(z)^N \prod_{j=0}^{N-1}\chi_j \, \cW_N(z).
    \end{equation}
    Since
    \begin{equation}\label{eq:character-product}
        \prod_{j=0}^{N-1}\chi_j
        = \e^{2\pi\ii\delta\sum_{j=0}^{N-1}j}
        =\e^{2\pi\ii\delta\binom N2}
        =\e^{2\pi\ii N\sigma}.
    \end{equation}
    we obtain
    \begin{equation}\label{eq:W-automorphy-intermediate}
        \cW_N(z+\ii\beta)
        =\rho_\beta(z)^N\prod_{j=0}^{N-1}\chi_j\,\cW_N(z)
        \quad\text{for all }z\in\C.
    \end{equation}
    Substituting \eqref{eq:characters}, \eqref{eq:character-product}, and \eqref{eq:sigma-definition} into \eqref{eq:W-automorphy-intermediate} gives \eqref{eq:W-automorphy}. The strip bound follows from the determinant formula \eqref{eq:wronskian} for $\cW_N$ and the bounds in \eqref{eq:strip-bound-F}: every entry of $\cW_N$ is uniformly bounded on $\{x+\ii y:x\in\R,\ 0\leq y\leq\beta\}$, and hence so is its determinant, proving \eqref{eq:W-strip-bound}.
\end{proof}

The next lemma characterizes the entire functions satisfying the preceding automorphy relation and a uniform strip bound.
\begin{lemma}\label{lem:automorphy-shift-invariant}
    Let $H$ be entire, let $\beta>0$, $N \in \N$, and $\sigma\in\R$. Assume that $H$ satisfies
    \begin{equation}\label{eq:H-automorphy}
        H(z+\ii\beta)
        =\e^{\pi N\beta-2\pi\ii N(z-\sigma)}H(z)
        = \rho_\beta(z-\sigma)^N H(z)
        \quad\text{for all }z\in\C,
    \end{equation}
    as well as the bound
    \begin{equation}\label{eq:H-strip-bound}
        M:=\sup_{\substack{x\in\R\\0\leq y\leq\beta}}|H(x+\ii y)|<\infty.
    \end{equation}
    Then there exists a sequence $\eta=(\eta_n)_{n\in\Z}\in\ell^\infty(\Z)$, with $\|\eta\|_{\ell^\infty} \leq M$, such that
    \begin{equation}\label{eq:H-shift-invariant}
        H(z)=\sum_{n\in\Z}\eta_n
        \exp\left[-\frac{\pi N}{\beta}
        \left(z-\sigma-\frac nN\right)^2\right]
        \quad\text{for all }z\in\C.
    \end{equation}
    The series in \eqref{eq:H-shift-invariant} converges locally uniformly on $\C$.
\end{lemma}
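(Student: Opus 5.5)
The plan is to remove the Gaussian factor, so that the automorphy relation becomes a periodicity statement in a multiplicative variable, and then read off the coefficients from a Laurent expansion. This mirrors the identity \eqref{eq:nonframe-Laurent-identity} used in Proposition~\ref{prop:nonframe-reduction}.

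\textbf{Step 1: a periodic auxiliary function.} Set
\begin{equation}
    K(z):=\exp\left[\frac{\pi N}{\beta}(z-\sigma)^2\right]H(z).
\end{equation}
A direct computation gives $(z+\ii\beta-\sigma)^2=(z-\sigma)^2+2\ii\beta(z-\sigma)-\beta^2$, so the exponential factor is multiplied by $\e^{2\pi\ii N(z-\sigma)-\pi N\beta}$ under $z\mapsto z+\ii\beta$. This is exactly the inverse of the automorphy factor in \eqref{eq:H-automorphy}, hence $K(z+\ii\beta)=K(z)$. Therefore $K$ is an entire function that is $\ii\beta$-periodic.

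\textbf{Step 2: Laurent expansion and identification with \eqref{eq:H-shift-invariant}.} Because of the periodicity, $K(z)=G(\e^{2\pi z/\beta})$ for a holomorphic function $G$ on $\C^*$. Write $G(w)=\sum_n a_n w^n$. I would then choose the normalization of the variable so that completing the square reproduces the Gaussians in \eqref{eq:H-shift-invariant}. Expanding
\begin{equation}
    \exp\left[-\frac{\pi N}{\beta}\left(z-\sigma-\frac nN\right)^2\right]
    =\exp\left[-\frac{\pi N}{\beta}(z-\sigma)^2\right]\e^{2\pi n(z-\sigma)/\beta}\e^{-\pi n^2/(N\beta)},
\end{equation}
we see that \eqref{eq:H-shift-invariant} is equivalent to
\begin{equation}
    K(z)=\sum_n \eta_n\e^{-\pi n^2/(N\beta)}\e^{-2\pi n\sigma/\beta}\,\e^{2\pi nz/\beta}.
\end{equation}
So I define
\begin{equation}
    \eta_n:=a_n\,\e^{\pi n^2/(N\beta)}\e^{2\pi n\sigma/\beta}.
\end{equation}
Uniqueness of the Laurent expansion of $G$ then gives \eqref{eq:H-shift-invariant} for these coefficients.

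\textbf{Step 3: the bound $|\eta_n|\le M$ (main obstacle).} The coefficient $a_n$ is given by the integral of $K(x_0+\ii y)\e^{-2\pi n(x_0+\ii y)/\beta}$ over $y\in[0,\beta]$, divided by $\beta$, and this holds for every real $x_0$. Along this segment,
\begin{equation}
    |K|\le M\,\exp\left[\frac{\pi N}{\beta}\bigl((x_0-\sigma)^2-y^2\bigr)\right]\le M\,\e^{\pi N(x_0-\sigma)^2/\beta}.
\end{equation}
This yields
\begin{equation}
    |\eta_n|\le M\exp\left[\frac{\pi N}{\beta}(x_0-\sigma)^2-\frac{2\pi n(x_0-\sigma)}{\beta}+\frac{\pi n^2}{N\beta}\right]
    =M\exp\left[\frac{\pi N}{\beta}\left(x_0-\sigma-\frac nN\right)^2\right].
\end{equation}
Choosing $x_0=\sigma+n/N$ gives $|\eta_n|\le M$. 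The dropped factor $\e^{-\pi N y^2/\beta}\le 1$ only helps, so the bound is sharp enough as it stands.

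\textbf{Step 4: convergence.} Local uniform convergence of \eqref{eq:H-shift-invariant} follows from $\eta\in\ell^\infty$ together with the Gaussian decay of the terms, exactly as in Lemma~\ref{lem:analytic-F}(i).
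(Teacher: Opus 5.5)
Your proposal is correct and follows essentially the same route as the paper's proof. Both arguments use the periodic auxiliary function $K$, its Laurent/Fourier expansion, and the coefficient bound along a vertical segment optimized at $x_0=\sigma+n/N$, followed by completing the square. The only cosmetic difference is that you expand in $\e^{2\pi z/\beta}$ rather than $\e^{2\pi(z-\sigma)/\beta}$, which moves the factor $\e^{2\pi n\sigma/\beta}$ into your definition of $\eta_n$.
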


\begin{proof}
    Set
    \begin{equation}\label{eq:K-definition}
        K(z)=\e^{\pi N(z-\sigma)^2/\beta}H(z) \quad\text{for all }z\in\C.
    \end{equation}
    Equations \eqref{eq:K-definition} and \eqref{eq:H-automorphy} give $K(z+\ii\beta)=K(z)$ for every $z\in\C$.

    Thus, $K$ is entire and $\ii\beta$-periodic. Equivalently, $K(z)$ is a holomorphic function of $\e^{2\pi z/\beta}$ on $\C^\ast$, and hence has a Laurent expansion. Written in terms of $z$, this gives
    \begin{equation}\label{eq:K-Fourier}
        K(z)=\sum_{n\in\Z} \kappa_n\e^{2\pi n(z-\sigma)/\beta} \quad\text{for all }z\in\C,
    \end{equation}
    with locally uniform convergence on $\C$. For every $u\in\R$ and $n\in\Z$, the Fourier coefficient formula applied to the $\beta$-periodic function $y\mapsto K(u+\ii y)$ gives
    \begin{equation}\label{eq:kappa-Fourier}
        \kappa_n\e^{2\pi n(u-\sigma)/\beta} = \frac1\beta\int_0^\beta K(u+\ii y)\e^{-2\pi\ii ny/\beta}\,dy. 
    \end{equation}
    By \eqref{eq:K-definition} and \eqref{eq:H-strip-bound},
    \begin{equation}\label{eq:K-strip-estimate}
        |K(u+\ii y)| \leq M\exp\left[ \frac{\pi N}{\beta} \bigl((u-\sigma)^2-y^2\bigr) \right] 
    \end{equation}
    for every $u\in\R$ and $y\in[0,\beta]$. Moreover,
    \begin{equation}\label{eq:Gaussian-integral-bound}
        \frac1\beta\int_0^\beta \e^{-\pi Ny^2/\beta}\,dy \leq 1,
    \end{equation}
    since the integrand is bounded above by $1$ on $[0,\beta]$. Taking absolute values in \eqref{eq:kappa-Fourier} and using \eqref{eq:K-strip-estimate} and \eqref{eq:Gaussian-integral-bound}, we obtain
    \begin{equation}\label{eq:kappa-intermediate-bound}
        |\kappa_n| \leq M\exp\left[ \frac{\pi N}{\beta}(u-\sigma)^2 -\frac{2\pi n}{\beta}(u-\sigma) \right],
    \end{equation}
    for every $u\in\R$, $n\in\Z$. For fixed $n$, the exponent on the right side of \eqref{eq:kappa-intermediate-bound} is minimized at 
    \begin{equation}
        u=\sigma+\frac{n}{N}. 
    \end{equation} 
    With this choice, \eqref{eq:kappa-intermediate-bound} gives 
    \begin{equation}\label{eq:kappa-Gaussian-bound}
        |\kappa_n| \leq M\e^{-\pi n^2/(\beta N)} \quad\text{for all }n\in\Z.
    \end{equation} 
    For every $n\in\Z$, define
    \begin{equation}\label{eq:eta-definition} 
        \eta_n=\kappa_n\e^{\pi n^2/(\beta N)}.
    \end{equation} 
    By \eqref{eq:kappa-Gaussian-bound} and \eqref{eq:eta-definition}, we have $\|\eta\|_{\ell^\infty}\leq M$. Combining \eqref{eq:K-definition} and \eqref{eq:K-Fourier} gives 
    \begin{equation}\label{eq:H-from-K-Fourier}
        H(z)=\sum_{n\in\Z}\kappa_n \exp\left[ -\frac{\pi N}{\beta}(z-\sigma)^2 +\frac{2\pi n}{\beta}(z-\sigma) \right] \quad\text{for all }z\in\C.
    \end{equation} 
    For every $n\in\Z$ and $z\in\C$, the exponent in \eqref{eq:H-from-K-Fourier} satisfies
    \begin{equation}\label{eq:complete-square-H}
        -\frac{\pi N}{\beta}(z-\sigma)^2 +\frac{2\pi n}{\beta}(z-\sigma) = -\frac{\pi N}{\beta} \left(z-\sigma-\frac nN\right)^2 +\frac{\pi n^2}{\beta N}. 
    \end{equation} 
    Substituting \eqref{eq:complete-square-H} into \eqref{eq:H-from-K-Fourier} and using \eqref{eq:eta-definition} gives \eqref{eq:H-shift-invariant}. Finally, the boundedness of $\eta$ and Gaussian decay imply that the series in \eqref{eq:H-shift-invariant} converges locally uniformly on $\C$. \end{proof}

The following result shows that the Wronskian $\cW_N$ is a Gaussian shift-invariant function.
\begin{corollary}\label{cor:W-shift-invariant}
    Let $\beta>0$, $\delta\in(0,1)$, and $N\geq2$, and set
    \begin{equation}
        \sigma=\frac{N-1}{2}\delta.
    \end{equation}
    There exists a sequence $\eta=(\eta_n)_{n\in\Z}\in\ell^\infty(\Z)$ such that
    \begin{equation}
        \cW_N(z) = \sum_{n\in\Z}\eta_n \exp\left( -\frac{\pi N}{\beta} \left(z-\sigma-\frac nN\right)^2 \right)
        \quad \text{for all } z \in \C.
    \end{equation}
    Consequently, the rescaled function
    \begin{equation}\label{eq:rescaled-W}
        H_N(z):= \cW_N\left(\frac zN+\sigma\right)
    \end{equation}
    has the representation
    \begin{equation}
        H_N(z) = \sum_{n\in\Z}\eta_n \exp\left( -\frac{\pi}{\beta N}(z-n)^2 \right)
        \quad \text{for all } z \in \C.
    \end{equation}
    In particular, $H_N$ is a Gaussian shift-invariant entire function with bounded coefficients. \end{corollary}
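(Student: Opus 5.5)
The corollary follows by feeding the Wronskian into the structure lemma just proved; no new analysis is needed. I will apply Lemma~\ref{lem:automorphy-shift-invariant} with $H=\cW_N$, the given $\beta>0$, the integer $N\geq2$, and $\sigma=\frac{N-1}{2}\delta\in\R$.

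First I check the two hypotheses of Lemma~\ref{lem:automorphy-shift-invariant}. The Wronskian $\cW_N$ is entire, being a determinant of derivatives of the entire functions $F_j$; this uses Lemma~\ref{lem:analytic-F}\eqref{item:F_convergence}. The automorphy relation \eqref{eq:H-automorphy} is exactly \eqref{eq:W-automorphy} from Lemma~\ref{lem:W-automorphy}. The strip bound \eqref{eq:H-strip-bound} is \eqref{eq:W-strip-bound}. Lemma~\ref{lem:automorphy-shift-invariant} then gives a sequence $\eta\in\ell^\infty(\Z)$, with $\|\eta\|_{\ell^\infty}$ bounded by the strip supremum of $\cW_N$, such that the first representation holds with locally uniform convergence.

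For the rescaled function I substitute $z\mapsto z/N+\sigma$ into this series. Each exponent becomes
\begin{equation}
    -\frac{\pi N}{\beta}\Bigl(\frac{z}{N}-\frac nN\Bigr)^2=-\frac{\pi}{\beta N}(z-n)^2,
\end{equation}
which yields the stated expansion of $H_N$. Locally uniform convergence is preserved, because the affine map $z\mapsto z/N+\sigma$ sends compact sets to compact sets. Hence $H_N$ is an entire function of the form \eqref{eq:F-def} with parameter $\beta N$ in place of $\beta$ and bounded coefficients $\eta$, that is, a Gaussian shift-invariant entire function.

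There is no genuine obstacle, since the argument is a direct assembly of earlier results. The one point to double-check is that the sign and normalization of $\sigma$ in \eqref{eq:W-automorphy} match \eqref{eq:H-automorphy} exactly, namely the factor $\e^{-2\pi\ii N(z-\sigma)}$ together with \eqref{eq:character-product}. Nonvanishing of $H_N$ is not part of this statement. When it is needed later, it follows from Lemma~\ref{lem:independence-multiplicity} under the distinct-character hypothesis.
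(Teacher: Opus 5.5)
Your proposal is correct and matches the paper's proof. The paper likewise applies Lemma~\ref{lem:automorphy-shift-invariant} to $\cW_N$, using the automorphy relation and strip bound from Lemma~\ref{lem:W-automorphy}, and then substitutes $z/N+\sigma$; your computation of the rescaled exponent and your remark that nonvanishing of $H_N$ comes later from Lemma~\ref{lem:independence-multiplicity} are both accurate.
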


\begin{proof}
Apply Lemma~\ref{lem:automorphy-shift-invariant} to Lemma~\ref{lem:W-automorphy}, then substitute $z/N+\sigma$ for the argument.
\end{proof}

\section{Zero density and completion of the proof}\label{sec:zero_density_proof}

For a discrete set $\Lambda\subset\R$ and a weight $\nu\colon\Lambda\to\N$, the weighted lower Beurling density is
\begin{equation} \label{eq:weighted-density}
    D^-(\Lambda,\nu)
    :=
    \liminf_{R\to\infty}\inf_{y\in\R}
    \frac{1}{2R}
    \sum_{\lambda\in\Lambda\cap[y-R,y+R]}\nu(\lambda).
\end{equation}

We use the following sharp bound on the density of zeros of Gaussian shift-invariant functions. It is a special case of \cite[Theorem~4.3]{GRSderivatives}, which also permits complex coefficients.

\begin{theorem}\label{thm:zero-density}
    Let $\beta>0$, let $u=(u_k)_{k\in\Z}\in\ell^\infty(\Z)$, and suppose that
    \begin{equation}
        f(t)
        =
        \sum_{k\in\Z}u_k\e^{-\pi(t-k)^2/\beta},
        \qquad t\in\R,
    \end{equation}
    is not identically zero. Let $\cZ(f)$ be the set of real zeros of the entire extension of $f$, and assign to each $\lambda\in\cZ(f)$ the weight
    \begin{equation}
        \nu_f(\lambda):=\ord_\lambda f.
    \end{equation}
    Then
    \begin{equation}
        D^-(\cZ(f),\nu_f)\leq1.
    \end{equation}
\end{theorem}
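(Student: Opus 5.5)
The plan is to count zeros with the argument principle on long rectangles of height $\beta$, using the automorphy of $f$. By Lemma~\ref{lem:analytic-F}\eqref{item:F_automorphy}, applied with $c=u$, the entire extension satisfies $f(z+\ii\beta)=\e^{\pi\beta-2\pi\ii z}f(z)$. Hence its zero set is invariant under $z\mapsto z+\ii\beta$, with multiplicities preserved. The weighted real zeros in $[y-R,y+R]$ are bounded by the total number of zeros, with multiplicity, in the half-open rectangle
\begin{equation}
Q_{y,R}=\{x+\ii t:\ y-R\le x\le y+R,\ 0\le t<\beta\}.
\end{equation}
It therefore suffices to show that, for every $\varepsilon>0$ and all large $R$, some $y$ gives at most $(2+\varepsilon)R$ zeros in $Q_{y,R}$.

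First, shift the rectangle slightly so that no zeros lie on its boundary; by discreteness and $\ii\beta$-periodicity of the zero set, a small vertical translate of the bottom edge will do. Then evaluate $\frac{1}{2\pi\ii}\oint f'/f$. The two horizontal edges are related by the automorphy: $\frac{f'}{f}(z+\ii\beta)=\frac{f'}{f}(z)-2\pi\ii$. Since they are traversed in opposite directions, together they contribute exactly
\begin{equation}
\frac{1}{2\pi\ii}\int_{y-R}^{y+R}2\pi\ii\,dx=2R .
\end{equation}
So the zero count in $Q_{y,R}$ equals $2R+\frac{1}{2\pi}\bigl(\Delta_{\mathrm{right}}\arg f-\Delta_{\mathrm{left}}\arg f\bigr)$, the argument changes being taken along the two vertical sides of length $\beta$. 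The whole claim thus reduces to showing that the argument variation along a vertical segment $\{x_0+\ii t: 0\le t\le\beta\}$ is $o(R)$ for suitably chosen $x_0$ near $y\pm R$.

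To control these vertical terms I would use Jensen's formula in a disc of fixed radius $r>\beta$ centred at $x_0+\ii\beta/2$. By \eqref{eq:strip-bound-F} and automorphy, $|f|$ is bounded on every horizontal strip of bounded height, so $\log|f|\le C$ on the disc. The argument variation along the segment is then bounded by a constant times
\begin{equation}
1+\#\{\text{zeros in a slightly larger disc}\}\le C'+C''\log\frac{1}{|f(x_0+\ii\beta/2)|},
\end{equation}
using the standard bound on $\Delta\arg$ via zeros of $\operatorname{Re}(\e^{\ii\theta}f)$ on the segment, each counted by Jensen. So one needs centre points with $\log|f(x_0+\ii\beta/2)|\ge -o(R)$, located within $o(R)$ of $y-R$ and of $y+R$ respectively.

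This is the main obstacle, since $f$ may be very small on long stretches, for instance when $u_k=0$ on long blocks. I would handle it by choosing $y$ adapted to $f$ rather than fixed, which is allowed because $D^-$ takes the infimum over $y$. Via the Laurent identity \eqref{eq:nonframe-Laurent-identity}, Fourier/Cauchy coefficient estimates on the segment $\{x+\ii t:0\le t\le\beta\}$ should give $\max_{0\le t\le\beta}|f(x+\ii t)|\gtrsim|u_k|$ for the $k$ nearest to $x$, up to Gaussian tails from the neighbouring coefficients. The idea is to place both vertical sides, near $y\pm R$, where this local coefficient mass is non-negligible.

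If the coefficients are instead negligible on a whole window, I expect $f$ to be a uniformly small perturbation of the tails from outside. In that case I would try a compactness argument in the spirit of \cite{GRSsampling}: normalise and pass to a weak-$*$ limit of the translated coefficient sequences, then apply Hurwitz's theorem to transfer the count to a limit function for which good points exist. Letting $R\to\infty$ and then $\varepsilon\to0$ yields $D^-(\cZ(f),\nu_f)\le1$.
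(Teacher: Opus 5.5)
You set up the argument principle correctly. Automorphy makes the two horizontal edges contribute exactly $2R$, so the zero count in the rectangle is $2R+\frac1{2\pi}(\Delta_{\mathrm{right}}-\Delta_{\mathrm{left}})$. When there are indices $k_L<k_R$ far apart with $|u_{k_L}|,|u_{k_R}|\ge \e^{-o(k_R-k_L)}$, the Cauchy estimate $\max_{0\le t\le\beta}|f(k+\ii t)|\ge|u_k|$ together with your Jensen bound does finish the argument.

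\textbf{The gap.} You reduce the claim to showing that \emph{each} vertical argument variation is $o(R)$, and this is false in general. Take $u=\delta_0$, so $f(z)=\e^{-\pi z^2/\beta}$. Along $\{x_0+\ii t:0\le t\le\beta\}$ the argument changes by exactly $-2\pi x_0$, so for every window $|\Delta_{\mathrm{right}}|+|\Delta_{\mathrm{left}}|\ge 4\pi R$. The correct count $0$ comes from cancellation between the two sides, not from each side being small. The same happens for superexponentially decaying $u$ such as $u_k=\e^{-k^2}$, where the variation at $x_0$ is still of order $|x_0|$. So no choice of $y$ adapted to $f$ can work. The compactness/Hurwitz fallback does not close this gap. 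Hurwitz only controls zeros on fixed compact sets, whereas you need a count over windows of length $2R\to\infty$ with error $o(R)$. Moreover, for such $u$ the translated coefficient sequences converge weak-$*$ to $0$.

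\textbf{How to fix it.} You only need a \emph{one-sided} bound, and you get it by averaging your identity over the position of the right-hand side. This is Jensen's formula on annuli for $G(w)=\sum_k u_k\e^{-\pi k^2/\beta}w^k$. By Cauchy--Riemann, $\Delta(x)=\frac{d}{dx}\int_0^\beta\log|f(x+\ii t)|\,dt$, and the integral is a locally Lipschitz function of $x$. Let $n(X)$ be the number of zeros in $\{0<\operatorname{Re}z<X,\ 0\le\operatorname{Im}z<\beta\}$, after shifting the origin so that no zero lies on $\{\ii t\}$. Your identity gives $n(X)=X+\frac1{2\pi}(\Delta(X)-\Delta(0))$ for almost every $X$. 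Integrating over $X\in[0,T]$ yields
\[
\int_0^T n(X)\,dX=\frac{T^2}{2}-\frac{T\,\Delta(0)}{2\pi}+\frac1{2\pi}\int_0^\beta\bigl(\log|f(T+\ii t)|-\log|f(\ii t)|\bigr)\,dt\le\frac{T^2}{2}+O(T).
\]
This uses only the \emph{upper} bound \eqref{eq:strip-bound-F} and the integrability of $\log|f(\ii t)|$.

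If $D^-(\cZ(f),\nu_f)>1$, then for some $\varepsilon>0$ every real interval of large length $\ell$ carries weighted zero count at least $(1+\varepsilon)\ell$. Hence $n(X)\ge(1+\varepsilon)X-C$ for large $X$, which contradicts the display. This route needs no lower bound on $|f|$, no adapted windows and no compactness, and it even bounds all zeros in the strip. The paper itself gives no proof of this statement; it quotes it from Gr\"ochenig--Romero--St\"ockler. The averaged argument would therefore make it self-contained.
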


We now combine Wronskian amplification with Theorem~\ref{thm:zero-density}. The resulting obstruction is the analytic heart of the argument. It shows that a nonzero Gaussian shift-invariant entire function with bounded coefficients cannot have its derivative vanish on a translated copy of $\delta\mathbb Z$ below critical density, except at the arithmetic values $\delta=(q-1)/q$.

\begin{proposition}\label{prop:critical}
    Let $\beta>0$, $0<\delta<1$, $x_*\in\R$, and $c\in\ell^\infty(\Z)$. If $F$ is as in \eqref{eq:F-def}, i.e,
    \begin{equation}
        F(z) = \sum_{k \in \Z} c_k \e^{-\pi(z-k)^2/\beta}, \quad z \in \C,
    \end{equation}
    and satisfies \eqref{eq:critical-grid}, i.e,
    \begin{equation}
        F'(x_*+n\delta)=0, \quad \text{for all } n \in \Z,
    \end{equation}
    then either $c=0$, or
    \begin{equation}
        \delta=\frac{q-1}{q}, \quad q \in \Z_{\geq 2}.
    \end{equation}
\end{proposition}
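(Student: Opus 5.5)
Assume $c\neq0$. We combine Lemma~\ref{lem:independence-multiplicity}, Corollary~\ref{cor:W-shift-invariant}, and Theorem~\ref{thm:zero-density} to get the inequality $\delta\geq 1-1/N$ for every admissible $N$, and then do the arithmetic.

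Fix an integer $N\geq2$ for which the characters $\chi_j=\e^{2\pi\ii j\delta}$, $0\leq j<N$, are pairwise distinct. This holds exactly when $k\delta\notin\Z$ for $1\leq k\leq N-1$. By Lemma~\ref{lem:independence-multiplicity}, $\cW_N\not\equiv0$, and $\ord_{x_*+n\delta}\cW_N\geq N-1$ for every $n\in\Z$. By Corollary~\ref{cor:W-shift-invariant}, the function $H_N(z)=\cW_N(z/N+\sigma)$ is a nonzero Gaussian shift-invariant function with parameter $\beta N$ and bounded coefficients. Since the substitution $z\mapsto z/N+\sigma$ is an affine bijection with real coefficients, orders of vanishing are preserved. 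So $H_N$ has real zeros of order at least $N-1$ at every point of $N(x_*-\sigma)+N\delta\Z$, which is a translate of the lattice $N\delta\Z$.

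We now compute the density. The interval $[y-R,y+R]$ contains at least $2R/(N\delta)-1$ points of this lattice, uniformly in $y$. Hence the weighted lower Beurling density of $(\cZ(H_N),\nu_{H_N})$ is at least $(N-1)/(N\delta)$. Other zeros, or higher multiplicities, only increase the weighted count. Theorem~\ref{thm:zero-density}, applied with $\beta N$ in place of $\beta$, gives $(N-1)/(N\delta)\leq1$, that is, $\delta\geq1-1/N$.

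For the case analysis, suppose first that $\delta$ is irrational. Then $k\delta\notin\Z$ for every $k\geq1$, so every $N\geq2$ is admissible. Letting $N\to\infty$ gives $\delta\geq1$, which contradicts $\delta<1$. Suppose instead that $\delta=p/q$ in lowest terms. Since $0<\delta<1$, we have $q\geq2$ and $1\leq p\leq q-1$. Take $N=q$: for $1\leq k\leq q-1$, $kp/q\notin\Z$ because $\gcd(p,q)=1$, so $N=q$ is admissible. Then $p/q\geq (q-1)/q$, which forces $p=q-1$ and hence $\delta=(q-1)/q$.

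The only delicate step is the density bookkeeping after the rescaling: checking that the dilation by $N$ turns the spacing $\delta$ into $N\delta$ while keeping multiplicities. That gives density $(N-1)/(N\delta)$, with the factor $N$ in the denominator. This is exactly what Theorem~\ref{thm:zero-density}, applied with parameter $\beta N$, needs, since that theorem's bound of $1$ does not depend on $\beta$. The remaining steps are direct applications of the earlier results.
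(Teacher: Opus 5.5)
Your proposal is correct and follows the paper's proof essentially step for step. You use Lemma~\ref{lem:independence-multiplicity} for nonvanishing and multiplicity, rescale to $H_N$ with zeros of order $N-1$ on a translate of $N\delta\Z$, apply Theorem~\ref{thm:zero-density} with parameter $\beta N$ to get $\delta\geq 1-1/N$, and finish with the same irrational/rational case split using $N=q$. The only cosmetic difference is that you bound the weighted density of $\cZ(H_N)$ directly by a lattice-point count, whereas the paper computes the exact density of the weighted sublattice and then invokes monotonicity.
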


\begin{proof}
    Assume that $c\neq0$, and let $N\geq2$ be an integer for which the distinctness condition \eqref{eq:distinct-characters} holds. By Lemma~\ref{lem:independence-multiplicity}, the Wronskian $\cW_N$ is not identically zero and satisfies \eqref{eq:alternative-W-multiplicity}. Consequently, the nonzero Gaussian shift-invariant function $H_N$ defined in \eqref{eq:rescaled-W} has a zero of multiplicity at least $N-1$ at every point of
    \begin{equation}\label{eq:rescaled-zero-lattice}
        \Lambda_N
        :=
        N(x_*-\sigma)+N\delta\Z.
    \end{equation}
    
    Define the constant weight $\nu_N\colon\Lambda_N\to\N$ by
    \begin{equation}
        \nu_N(\lambda):=N-1.
    \end{equation}
    Since the points of $\Lambda_N$ have spacing $N\delta$, its weighted lower Beurling density is
    \begin{equation}\label{eq:rescaled-zero-density}
        D^-(\Lambda_N,\nu_N)
        =
        \frac{N-1}{N\delta}.
    \end{equation}
    Moreover, $\Lambda_N\subset\cZ(H_N)$ and $\nu_N(\lambda)\leq\nu_{H_N}(\lambda)$ for every $\lambda\in\Lambda_N$. It follows directly from the definition of the weighted lower Beurling density that
    \begin{equation}
        D^-(\Lambda_N,\nu_N)
        \leq
        D^-(\cZ(H_N),\nu_{H_N}).
    \end{equation}
    Applying Theorem~\ref{thm:zero-density} to the representation \eqref{eq:rescaled-W}, whose Gaussian parameter is $\beta N$, yields
    \begin{equation}\label{eq:key-inequality}
        \frac{N-1}{N\delta}
        =
        D^-(\Lambda_N,\nu_N)
        \leq
        D^-(\cZ(H_N),\nu_{H_N})
        \leq1.
    \end{equation}
    Equivalently,
    \begin{equation}
        \delta\geq1-\frac1N.
    \end{equation}
    
    If $\delta$ is irrational, the characters in \eqref{eq:distinct-characters} are pairwise distinct for every $N\geq2$. Letting $N\to\infty$ in \eqref{eq:key-inequality} gives $\delta\geq1$, contradicting the assumption $\delta<1$.
    
    Suppose now that $\delta$ is rational, and write
    \begin{equation}
        \delta=\frac pq
    \end{equation}
    in lowest terms, where $p,q\in\Z_{>0}$. Taking $N=q$, the characters
    \begin{equation}
        \e^{2\pi\ii jp/q},
        \qquad j=0,\ldots,q-1,
    \end{equation}
    are the $q$ distinct roots of unity. Hence \eqref{eq:key-inequality} gives
    \begin{equation}
        \frac pq\geq\frac{q-1}{q},
    \end{equation}
    and therefore $p\geq q-1$. Since $\delta<1$ implies $p<q$, we must have $p=q-1$. Thus, as claimed,
    \begin{equation}
        \delta=\frac{q-1}{q}, \quad q \in \Z_{\geq 2}.
    \end{equation}
\end{proof}

\begin{proof}[Proof of Theorem~\ref{thm:main}]
    As explained in Section~\ref{sec:intro}, the implication ``$\Longrightarrow$'' in \eqref{eq:mainequivalence} follows from the density theorem, the Balian--Low theorem, and the non-frame result for odd windows~\cite{LN}.
    
    We now prove the implication ``$\Longleftarrow$''. Consider the Gabor system $\cG(\phi_1,a,b)$ with $\phi_1(t)$ defined in \eqref{eq:phi1}, $a,b > 0$, and suppose that
    \begin{equation}\label{eq:converse-assumption}
        0<\delta=ab<1,
        \qquad
        \delta\neq\frac{q-1}{q}
        \quad\text{for every }q\in\Z_{\geq2}.
    \end{equation}
    If $\cG(\phi_1,a,b)$ were not a frame, Proposition~\ref{prop:nonframe-reduction} would produce a nonzero sequence $c\in\ell^\infty(\Z)$ such that the function $F$ in \eqref{eq:F-def} would satisfy \eqref{eq:critical-grid}. Proposition~\ref{prop:critical} would then imply that
    \begin{equation}
        ab = \delta=\frac{q-1}{q}
    \end{equation}
    for some integer $q\geq2$, contradicting \eqref{eq:converse-assumption}. Therefore $\cG(\phi_1,a,b)$ is a frame.
    
    Since $h_1$ is a nonzero constant multiple of $\phi_1$, the frame property is the same for the two windows. Hence, $\cG(h_1,a,b)$ is a frame, which completes the proof.
\end{proof}

\renewcommand{\bibfont}{\footnotesize}
\bibliographystyle{plainnat}
\bibliography{bib}

\end{document}